\documentclass[11pt,reqno]{amsart}

\usepackage{amsthm,amsmath,amssymb}
\usepackage{graphicx}
\usepackage{float}
\usepackage[colorlinks=true,
linkcolor=blue,
urlcolor=blue,
citecolor=blue]{hyperref}
\usepackage{mathtools}
\usepackage{relsize}
\usepackage{tikz}
\usepackage{enumerate}
\usepackage[toc,page]{appendix}
\usepackage{pdflscape}
\usepackage{multirow}
\usepackage{adjustbox,expl3,etoolbox} 
\usepackage[margin = 3cm]{geometry}
\usepackage{rotating}
\usepackage{tablefootnote}
\usepackage{longtable}
\usepackage{comment}
\usepackage{xcolor}
\usepackage{mathrsfs}
\usepackage{bbm}
\usepackage[all]{xy}

\newtheorem{thm}{Theorem}[section]
\newtheorem{lem}[thm]{Lemma}
\theoremstyle{definition}
\newtheorem{defn}[thm]{Definition}
\newtheorem{prop}[thm]{Proposition}

\newtheorem{cor}[thm]{Corollary}

\newtheorem{rmk}[thm]{Remark}
\newtheorem{prob}[thm]{Problem}

\DeclareMathOperator{\lcm}{lcm}

\newcommand{\F}{\mathbb{F}}

\newcommand{\C}{\mathbb{C}}
\newcommand{\HH}{\mathbb{H}}
\newcommand{\st}{{|}}
\newcommand{\rk}{{\rm rk}}

\newcommand{\fri}{{\mathrm{i}}}
\newcommand{\frj}{{\mathrm{j}}}
\newcommand{\frk}{{\mathrm{k}}}


\title{The Combinatorial Nullstellensatz, Chevalley-Warning Theorem and weak Finitesatz in skew polynomial rings}
\author{Gil Alon}
\email{gilal@openu.ac.il}
\address{Department of Mathematics and Computer Science, The Open University of Israel}

\author{Angelot Behajaina}
\email{angelot.behajaina@univ-lille.fr}
\address{Univ. Lille, CNRS, UMR 8524, Laboratoire Paul Painlevé, F-59000 Lille, France}

\author{Elad Paran}
\email{paran@openu.ac.il}
\address{Department of Mathematics and Computer Science, The Open University of Israel}

\begin{document}

\begin{abstract}
    We study zeros of polynomials in the multivariate skew polynomial ring $D[x_1,\ldots,x_n; \sigma]$, where $\sigma$ is an automorphism of a division ring $D$. We prove a generalization of Alon's celebrated Combinatorial Nullstellensatz for such polynomials. In the case where $D$ is a finite field, we prove skew analogues of the Chevalley--Warning theorem, Ax's Lemma, and the weak case of Terjanian's Finitesatz.
\end{abstract}

\maketitle

\section{Introduction}

Skew polynomial rings were first introduced by Noether and Schmeidler in \cite{NS20}, and their theoretical framework was established by Ore in his classical paper \cite{Ore33}. The skew polynomial ring $R = D[x; \sigma]$ is the set of polynomials over a division ring $D$, equipped with the usual addition and with multiplication determined by the rule $xa = a^\sigma x$ for all $a \in D$.  These rings have been extensively studied in the literature, both for their ring-theoretic properties (for example in \cite{Jac37},\cite{Coh63},\cite{Jat71}, \cite{Ram84}, \cite{GL94}, \cite{SZ02}, \cite{MK19}, as well as for their various applications (for example in \cite{IM94},\cite{GL94b},\cite{BU09}, \cite{She20}). A systematic study of zero sets of skew polynomials in one variable was carried out by Lam and Leroy in their papers \cite{Lam86}, \cite{VL88}, \cite{LL04}, \cite{LaL08}.

In the present work, building upon the works of Lam and Leroy, we study zeros of polynomials in the {\bf multivariate} skew polynomial ring $D[x_1,\ldots,x_n;\sigma]$, see Definition \ref{def:multi} below. We prove several thematically related analogues of classical results concerning zeros of multivariate polynomials over fields: The Combinatorial Nullstellensatz of N. Alon, the Chevalley--Warning theorem, and the weak case of the Finitesatz of Terjanian. 

\subsection{The Combinatorial Nullstellensatz}

The celebrated Combinatorial Nullstellensatz of N. Alon \cite[Theorem 1.2]{Alon99} is now a classical result of algebraic combinatorics. It states that a non-zero multivariate polynomial $p \in K[x_1,\ldots,x_n]$ over a field $K$ obtains non-zeros in any large enough grid in $K^n$. This theorem has numerous applications in various areas of combinatorics. The theorem was extended from fields to division rings by the third author \cite[Theorem 1.1]{Par23} . This Combinatorial Nullstellensatz over division rings has implications to the additive theory of division rings, see \cite[\S3,\S4]{Par23}. Here, we extend the Combinatorial Nullstellensatz further and prove the following generalization for the ring $D[x_1,\ldots,x_n;\sigma]$:

\begin{thm}[Skew Combinatorial Nullstellensatz]\label{comb_null_main} Let $p \in D[x_1,\ldots,x_n;\sigma]$ be of total degree $\deg(p)=\sum_{i=1}^n {k_i}$, where each $k_i$ is a non-negative integer, such that the coefficient of $x_1^{k_1}\cdots x_n^{k_n}$ in $p$ is non-zero. Let $A_1,\ldots,A_n$ be $\sigma$-algebraic subsets of $D$ such that $A_1 \times \dots \times A_n \subseteq D^{n,\sigma}$ and that $\rk_\sigma(A_i) > k_i$ for all $1 \leq i \leq n$. Then there is a point in $A_1 \times \dots \times A_n$ at which $p$ does not vanish. 
\end{thm}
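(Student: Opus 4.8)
The plan is to follow the architecture of N.~Alon's original proof of the Combinatorial Nullstellensatz, replacing the ingredients specific to commutative polynomial rings by their skew counterparts: the role of $\prod_{a\in A_i}(x_i-a)$ will be played by the minimal-degree polynomial vanishing on $A_i$, and the one-variable base case will be supplied by the Lam--Leroy theory of zeros of skew polynomials in one variable. Concretely, for each $i$ let $g_i\in D[x_i;\sigma]$ be the monic polynomial of least degree vanishing on $A_i$; the theory of $\sigma$-algebraic sets should give $\deg g_i=\rk_\sigma(A_i)=:d_i$, and by hypothesis $d_i>k_i$. Assume, for contradiction, that $p$ vanishes at every point of $A_1\times\cdots\times A_n$.

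The first step is a non-commutative version of Alon's division lemma. Since $g_i$ is monic \emph{as a polynomial in the single variable $x_i$} and $x_i$ commutes with the other variables, one can divide $p$ on the right by $g_i(x_i)$ inside $D[x_1,\ldots,x_n;\sigma]$; performing this successively for $g_1(x_1),\ldots,g_n(x_n)$ should yield
\[
p=\sum_{i=1}^n h_i g_i+r,\qquad \deg_{x_i}r<d_i\ \text{ for all }i ,
\]
with each $\deg(h_ig_i)\le\deg p$. The points to check here are routine but must be done carefully: that twisting a monomial $x^\alpha$ past elements of $D$ does not disturb the monomial structure of leading terms, and that a division step in the variable $x_j$ raises neither the total degree nor the $x_i$-degree for $i\ne j$, so that the bound $\deg_{x_i}r<d_i$ survives all subsequent divisions.

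The second step is to show $r=0$. First, each $h_ig_i$ vanishes on the grid: viewed as a multivariate polynomial, $g_i(x_i)$ evaluates at any $(b_1,\ldots,b_n)$ in $A_1\times\cdots\times A_n$ to its one-variable value $g_i(b_i)=0$, and --- using $A_1\times\cdots\times A_n\subseteq D^{n,\sigma}$ and the product rule for skew evaluation --- left multiplication by $h_i$ preserves this vanishing, so $r=p-\sum_i h_ig_i$ vanishes on the grid as well. It then remains to prove the ``interpolation'' statement: any $r\in D[x_1,\ldots,x_n;\sigma]$ with $\deg_{x_i}r<\rk_\sigma(A_i)$ for all $i$ that vanishes on $A_1\times\cdots\times A_n$ is zero. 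I would prove this by induction on $n$. For $n=1$ it is precisely the Lam--Leroy fact that the polynomials vanishing on the $\sigma$-algebraic set $A_1$ form the principal left ideal $D[x_1;\sigma]\,g_1$, all of whose nonzero members have degree $\ge d_1$. For $n>1$, write $r=\sum_{j<d_n}q_j(x_1,\ldots,x_{n-1})\,x_n^j$; partially evaluating the first $n-1$ coordinates at any $(b_1,\ldots,b_{n-1})\in A_1\times\cdots\times A_{n-1}$ produces an element of $D[x_n;\sigma]$ of $x_n$-degree $<d_n$ that vanishes on $A_n$, hence is $0$, forcing $q_j(b_1,\ldots,b_{n-1})=0$; so each $q_j$ satisfies the inductive hypothesis and vanishes, giving $r=0$.

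The final step is the leading-coefficient argument, unchanged in spirit from the commutative case. From $p=\sum_i h_ig_i$ with $\deg(h_ig_i)\le\deg p=\sum_j k_j$, observe that $x_1^{k_1}\cdots x_n^{k_n}$ has total degree $\sum_j k_j$; if it occurred in some $h_ig_i$ it would lie in the top-total-degree homogeneous part of $h_ig_i$, which equals $\bigl(\text{top part of }h_i\bigr)\cdot x_i^{d_i}$ and every monomial of which has $x_i$-degree $\ge d_i>k_i$ --- impossible. Hence the coefficient of $x_1^{k_1}\cdots x_n^{k_n}$ in $p$ is $0$, contradicting the hypothesis. I expect the main obstacle to be not this combinatorics but the second step: one must have, from the development of $D[x_1,\ldots,x_n;\sigma]$ and $D^{n,\sigma}$ in Definition~\ref{def:multi} and its surroundings, that multivariate evaluation is compatible with partial evaluation in a single coordinate and obeys a product rule making vanishing stable under left multiplication, together with the Wedderburn-type description of the vanishing ideal of a $\sigma$-algebraic set; once these inputs are in hand, Steps 1 and 3 are bookkeeping.
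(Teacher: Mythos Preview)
Your proposal follows Alon's original two-step architecture (division by the minimal polynomials $g_i$, then an interpolation lemma), whereas the paper proves Theorem~\ref{comb_null_main} by a quite different induction on $\deg p$: it peels off a single linear factor $x_1-a_1$, shows the quotient $q$ vanishes on a \emph{modified} grid $B_1\times A_2^\sigma\times\cdots\times A_n^\sigma$ with $B_1=\{b_1^{\,b_1-a_1}:b_1\in A_1\setminus\{a_1\}\}$, and invokes Lemma~\ref{divides} to bound $\rk_\sigma(B_1)$. Your route has the advantage of mirroring the classical proof and isolating a clean reusable interpolation statement; the paper's route avoids having to set up multivariate division and partial evaluation, at the price of tracking how $\sigma$-conjugation deforms the grid.

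Your Steps~1 and~3 are fine (the division works because each $g_i$ is monic in $x_i$ and the $x_j$ commute pairwise, and the top-degree bookkeeping is unchanged). The one genuine wrinkle is in Step~2. When you ``partially evaluate the first $n-1$ coordinates'' at $(b_1,\ldots,b_{n-1})$, the resulting element of $D[x_n;\sigma]$ is \emph{not} $\sum_j q_j(b_1,\ldots,b_{n-1})\,x_n^j$. Using Remark~\ref{substitution} (substitute $x_n\mapsto b_n$ first) together with the product formula and the identity $(b_i)^{N_j^\sigma(b_n)}=\sigma^j(b_i)$, which follows from the $D^{n,\sigma}$ hypothesis via \eqref{eq:normsig24}, one finds
\[
r(b_1,\ldots,b_n)\;=\;\sum_{j<d_n} q_j\bigl(\sigma^j(b_1),\ldots,\sigma^j(b_{n-1})\bigr)\,N_j^\sigma(b_n).
\]
So the coefficients are $q_j$ evaluated at the \emph{$\sigma^j$-twisted} point. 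This does not break your argument: the one-variable base case still forces each coefficient to vanish, hence $q_j$ vanishes on $\sigma^j(A_1)\times\cdots\times\sigma^j(A_{n-1})$, and since $\rk_\sigma(A_i^{\sigma^j})=\rk_\sigma(A_i)$ the induction goes through. But you should state the interpolation step with this twist (or, equivalently, run the induction by substituting $x_n$ first and reading off coefficients in $D[x_1,\ldots,x_{n-1};\sigma]$), rather than asserting $q_j(b_1,\ldots,b_{n-1})=0$ directly.
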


Here, following Lam and Leroy, we say that a set $A \subseteq D$ is {\it $\sigma$-algebraic} if there exists a non-zero polynomial in $D[x;\sigma]$ that vanishes at all elements of $D$, see \S\ref{sec:prelim}; The space $D^{n,\sigma}$ is the \emph{$\sigma$-affine space} in $D$, and $\rk_\sigma(A_i)$ denotes the $\sigma$-rank of $A_i$, see Definition \ref{def:affine} and Definition \ref{def:rank} below. The space $D^{n,\sigma}$ is the space of points where $\sigma$-substitution of points is well-defined, see the discussion in \S\ref{subsec:multi_variate}; We say that a polynomial {\it vanishes} at a point in $D^{n,\sigma}$ if the value of its $\sigma$-substitution is $0$.

In the special case where $\sigma$ is the identity automorphism, Theorem \ref{comb_null_main} recovers \cite[Theorem 1.1]{Par23}, and if in addition $D$ is a field, we recover Alon's original theorem \cite[Theorem 1.2]{Alon99}.

\subsection{The Chevalley--Warning theorem}

Let $f_1,\ldots,f_r \in F[x_1,\ldots,x_n]$ be $r$ polynomials in $n$ variables over a finite field $F$ of characteritsitc $p$ and  order $q$. Let $d_i$  denote the total degree of $f_i$, for each $1 \leq i \leq r$. The classical Chevalley--Warning theorem states that if $n > d_1+\ldots+d_r$, then the number of common solution in $F^n$ for $f_1,\ldots,f_r$ is divisible by $p$. 

Suppose now that $\sigma$ is an automorphism of $F$, and let $o(\sigma)$ denote its order. In \S\ref{sec:cheval} we prove the following theorem:

\begin{thm}[Skew Chevalley--Warning theorem]\label{thm:chev_intro}
Let $f_1,\dots,f_r \in F[x_1,\dots,x_n;\sigma]$ be polynomials such that $\deg(f_1)+\cdots+\deg(f_r)<n \cdot\big(\frac{q^{\frac{1}{o(\sigma)}}-1}{q-1}\big)$. Then the number of commons zeros of $f_1,\ldots,f_r$ in the $\sigma$-affine space $F^{n,\sigma}$ is divisible by $p$.
\end{thm}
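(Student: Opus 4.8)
The classical Chevalley–Warning proof over a finite field $F$ of order $q$ works as follows: the number of common zeros of $f_1,\dots,f_r$ is counted (mod $p$) by $\sum_{x \in F^n} \prod_{i=1}^r (1 - f_i(x)^{q-1})$, using that $a^{q-1} = 1$ for $a \neq 0$ and $0^{q-1}=0$. One then expands into a sum of monomials $\sum_x c \cdot x_1^{e_1}\cdots x_n^{e_n}$, and uses the fact that $\sum_{a \in F} a^e = 0$ in $F$ whenever $(q-1) \nmid e$ or $e = 0$ with $|F|$ divisible by $p$; since the total degree of $\prod (1-f_i^{q-1})$ is at most $(q-1)\sum d_i < (q-1)n$, every monomial has some exponent $e_j$ with $0 \le e_j < q-1$, forcing the inner sum to vanish.

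**The skew adaptation.** The plan is to mimic this, but over the $\sigma$-affine space $F^{n,\sigma}$ and using $\sigma$-substitution. First I would recall from the Lam–Leroy theory (as developed in the preliminaries) the structure of $F^{n,\sigma}$: a point $(a_1,\dots,a_n)$ lies in $F^{n,\sigma}$ precisely when the $\sigma$-substitutions are compatible, and the key arithmetic fact is that for a fixed $a \in F$, the value $x^k \mapsto$ (the appropriate $\sigma$-twisted power) is governed by the norm-like expression $N_k(a) = a \cdot a^\sigma \cdots a^{\sigma^{k-1}}$. Since $\sigma$ has order $o = o(\sigma)$, the fixed field $F_0 = F^\sigma$ has order $q^{1/o}$, and $N_o(a) = N_{F/F_0}(a) \in F_0$. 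The indicator trick becomes: for $b \in F^{n,\sigma}$, $f_i(b) = 0$ iff $N_{F/F_0}(f_i(b))^{q^{1/o}-1} = 0$ in $F_0$, and equals $1$ otherwise — so the count mod $p$ is $\sum_{b \in F^{n,\sigma}} \prod_{i=1}^r \bigl(1 - N_{F/F_0}(f_i(b))^{q^{1/o}-1}\bigr)$. The degree bound in the theorem, $\sum \deg f_i < n\cdot\frac{q^{1/o}-1}{q-1}$, is exactly what is needed so that the total "$\sigma$-degree" of $\prod_i N_{F/F_0}(f_i)^{q^{1/o}-1}$ — which picks up a factor roughly $o$ from the norm and $q^{1/o}-1$ from the power — stays below the threshold forcing a vanishing sum over some coordinate. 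I would expand this product in the skew polynomial ring, reduce each monomial $x_1^{e_1}\cdots x_n^{e_n}$ via the multiplication rule $x_j a = a^\sigma x_j$, and evaluate $\sum_{b_j \in \text{(fiber)}} (\text{value})$ coordinate by coordinate, showing it vanishes whenever the relevant exponent is too small.

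**The main obstacle.** The hard part will be handling the non-commutativity cleanly: unlike the classical case, $\sigma$-substitution is not a ring homomorphism $F[x_1,\dots,x_n;\sigma] \to F$, so one cannot naively expand $\prod(1-f_i^{q-1})$ and substitute termwise. I expect the right device is to work with a well-chosen set of coset representatives for the fibers of $F^{n,\sigma} \to$ (the compatibility data), reducing the sum to an honest sum over a commutative object — essentially over $F_0$-structured coordinates — where the classical exponent-sum vanishing lemma applies. Concretely, I would prove a lemma: for each $i$, the function $b \mapsto N_{F/F_0}(f_i(b))$ on $F^{n,\sigma}$ agrees with the evaluation of a genuine polynomial over $F_0$ (or over $F$) in suitably chosen coordinates, of total degree $\le o \cdot \deg(f_i)$ — this is where the factor $o$ and the constant $\frac{q^{1/o}-1}{q-1} = \frac{|F_0|-1}{|F|-1}$ enter, since $\frac{1}{o}\cdot\frac{q-1}{q^{1/o}-1}$ relates the two. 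Then apply Chevalley–Warning-style counting over $F_0$. A secondary technical point is verifying that the size of $F^{n,\sigma}$ and of each fiber is itself a power of $p$ (or at least compatible with the divisibility claim), which should follow from the Lam–Leroy description of $\sigma$-affine space as a union of cosets; I would isolate this as a preliminary observation before running the main argument.
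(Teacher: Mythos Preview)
Your overall architecture matches the paper's: decompose $F^{n,\sigma}$ into pieces, convert the skew evaluation on each piece into an ordinary commutative evaluation, and apply an Ax-type vanishing lemma over the fixed field $K=F^\sigma$. The paper makes this explicit as follows. With $\mathscr{W}_F=\{\sigma(a)a^{-1}:a\in F^\times\}$ and a choice of $\omega_\lambda$ with $\sigma(\omega_\lambda)\omega_\lambda^{-1}=\lambda$, one has $F^{n,\sigma}=\bigcup_{\lambda\in\mathscr{W}_F}\omega_\lambda K^n$, and on each piece the skew evaluation $f(\omega_\lambda\mathbf{a})$ agrees with the ordinary evaluation $f^\lambda(\mathbf{a})$ of a commutative polynomial $f^\lambda$ of the \emph{same} degree as $f$. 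The paper then sums $P=\sum_\lambda\prod_i\bigl(1-(f_i^\lambda)^{q-1}\bigr)$ over $K^n$; the overlap of the pieces at $\mathbf{0}$ washes out because $|\mathscr{W}_F|\equiv 1\pmod p$, and Ax over $K^n$ applies since $\deg P\le (q-1)\sum d_i<n(|K|-1)$. Your write-up never isolates the crucial ``same degree on each fibre'' fact, and speaking of a single polynomial ``on $F^{n,\sigma}$ in suitably chosen coordinates'' is ill-posed, since $F^{n,\sigma}$ is not an affine space; the reduction must be done fibre by fibre.

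Your norm idea, however, is a genuine refinement rather than a detour, and your bookkeeping is wrong in a fortunate direction. Because $\mathbf{a}\in K^n$ is $\sigma$-fixed, $\sigma^j\bigl(f_i^\lambda(\mathbf{a})\bigr)=(\sigma^jf_i^\lambda)(\mathbf{a})$, so on each fibre the function $\mathbf{a}\mapsto N_{F/K}\bigl(f_i^\lambda(\mathbf{a})\bigr)$ is the value of the polynomial $\prod_{j=0}^{o-1}(\sigma^jf_i^\lambda)$ of degree $o\,d_i$, and the indicator $\prod_i\bigl(1-N_{F/K}(f_i^\lambda)^{|K|-1}\bigr)$ has degree at most $o(|K|-1)\sum d_i$. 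Ax over $K^n$ then requires only $\sum d_i<n/o$, which is strictly \emph{weaker} than the paper's hypothesis $\sum d_i<n\cdot\frac{|K|-1}{q-1}$ whenever $o\ge 2$, since $\frac{q-1}{|K|-1}=1+|K|+\dots+|K|^{o-1}>o$. So the stated bound is not ``exactly what is needed'' by your method; carried out carefully, your argument proves more than the theorem asserts --- and indeed the paper leaves the optimality of its bound as an open question.
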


Note that in the special case where $\sigma$ is the identity, Theorem \ref{thm:chev_intro} recovers the usual Chevalley--Warning theorem. The proof of our result involves a variant of another classical result, Ax's Lemma, see Lemma \ref{lem:arev} below.

\subsection{The Finitesatz and the ideal of every-where vanishing polynomials}

Given a field $F$ and an ideal $J$ in the polynomial ring $F[x_1,\ldots,x_n]$, let $\mathscr{V}(J)$ denote the set of common zeros of $J$ in $F^n$, and let $\mathscr{I}(\mathscr{V}(J))$ denote the vanishing ideal of $\mathscr{V}(J)$. Describing this ideal is a fundamental question of algebraic geometry over $F$. In the case where $F$ is algebraically closed, $\mathscr{I}(\mathscr{V}(J))$ is the radical $\sqrt{J}$ of $J$, by Hilbert's Nullstellensatz. In the case where $F$ is a finite field of characteristic $p$ and order $q = p^m$, we have the ``Finitesatz" of Terjanian \cite{Ter66}, which states that $\mathscr{I}(\mathscr{V}(J)) =  J+\langle x_1^{q}-x_1,\dots,x_n^q-x_n \rangle$. In the special case where $\mathscr{V}(J) = \emptyset$  is the empty set we have the ``weak" Finitesatz, which states that $F[x_1,\ldots,x_n] = \mathscr{I}(\mathscr{V}(J)) =  J+\mathscr{I}(F^n)$, where $\mathscr{I}(F^n) = \langle x_1^{q}-x_1,\dots,x_n^q-x_n \rangle$ is the ideal of polynomials vanishing everywhere in $F^n$ (this corresponds to the classical weak Nullstellensatz, which states that if an ideal $J$ in $\C[x_1,\ldots,x_n]$ has an empty zero set, then $\C[x_1,\ldots,x_n] = J = J+ (0)$, where $(0)$ is the ideal of functions vanishing everywhere in $\C^n$).

Suppose now that $\sigma$ is an automorphism of $F$. Then $\sigma={\rm Frob}^k$, for a suitable $0 \leq k \leq m-1$, where ${\rm Frob}$ is the Frobenius automorphism of $F$. Given a left ideal $J$ in $F[x_1,\ldots,x_n;\sigma]$, let $\mathscr{V}(J)$ denote its zero set in $F^{n,\sigma}$, and let $\mathscr{I}(\mathscr{V}(J))$ denote the left ideal of polynomials vanishing at $\mathscr{V}(J)$. For $n = 1$, we prove that $\mathscr{I}(\mathscr{V}(J)) = J+F[x;\sigma]\cdot \left(x^{\frac{m}{\theta}(p^\theta-1)+1}-x\right)$, where $\F_{p^\theta}$ is the fixed field of $\sigma$ and $\theta = \gcd(m,k)$, see Theorem \ref{thm:skewonedim} below. 

For $n > 1$, we do not have a general description of $\mathscr{I}(\mathscr{V}(J))$ -- this seems a more difficult problem than its commutative counterpart. However, we are able to prove the ``weak" skew Finitesatz: If $\mathscr{V}(J)$ is empty, then $$F[x_1,\dots,x_n;\sigma]=\mathscr{I}(\mathscr{V}(J))=J+\mathscr{I}(F^{n,\sigma}),$$
where $\mathscr{I}(F^{n,\sigma})$ is the left ideal of polynomials in $F[x_1,\ldots,x_n;\sigma]$ which vanish everywhere in $F^{n,\sigma}$, see Theorem \ref{thm:skewfinsatz} below. We also give an explicit description of the ideal $\mathscr{I}(F^{n,\sigma})$, see Theorem \ref{thm:descrvanide}.

{\bf Acknowledgements.} The second author acknowledges the support of the CDP C2EMPI, as well as the French State under the France-2030 programme, the University of Lille, the Initiative of Excellence of the University of Lille, the European Metropolis of Lille for their funding and support of the R-CDP-24-004-C2EMPI project. He is also grateful for the support of a Technion fellowship, of an Open University of
Israel post-doctoral fellowship, and of the Israel Science Foundation (grant no. 353/21) during the completion of certain parts of this work.

\section{Preliminaries}\label{sec:prelim}

For the reader's convenience, we gather in this section some basic material on {\bf multivariate skew polynomials}. This notion generalizes the classical skew polynomials in one variable introduced by Ore \cite{Ore33}. For further reference, see for example \cite{Vos86}. 

Fix a division ring $D$ and an automorphism $\sigma$ of $D$. 

\subsection{The general multivariate case}\label{subsec:multi_variate}

Let $x_1,\dots,x_n$ ($n\geq 1$) denote $n$ variables. 
\begin{defn}\label{def:multi} The \emph{multivariate skew polynomial ring} $R=D[x_1,\ldots,x_n ; \sigma]$ consists of all formal finite sums 
$$\sum_{(k_1,\dots,k_n) \in \mathbb{N}^n}a_{k_1,\dots,k_n} x_1^{k_1}\cdots x_n^{k_n}$$ with coefficients $a_{k_1,\dots,k_n} \in D$, where addition is defined component-wise, and multiplication satisfies the following rules:
\begin{itemize}
\item $x_ix_j=x_jx_i$ for all $1\leq i,j \leq n$;
\item $x_i\cdot a=\sigma(a)x_i$ for all $1\leq i \leq n$ and all $a \in D$. 
\end{itemize}
\end{defn}
For every ${\bf a}=(a_1,\dots,a_n) \in D^n$, consider the {\bf left} ideal
$$\mathfrak{m}_{{\bf a}}=R(x_1-a_1)+\dots+R(x_n-a_n)
$$ of $R$, generated by $x_1-a_1,\dots,x_n-a_n$. Unlike in the commutative case, this ideal is not always a proper (left) ideal. Proposition \label{prop:maproper} below provides a necessary and sufficient condition for $\mathfrak{m}_{{\bf a}}$ to be proper. First, we have:

\begin{lem}\label{lem:condformanontri}
Let ${\bf a} \in D^n$. Assume that $\sigma(a_j)a_i \neq \sigma(a_i)a_j$ for some $1 \leq i \neq j \leq n$. Then $\mathfrak{m}_{{\bf a}}=R$.
\end{lem}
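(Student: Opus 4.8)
The plan is to produce a nonzero constant inside the left ideal $\mathfrak{m}_{{\bf a}}$. Since $D$ sits in $R$ as the ring of constants and every nonzero element of the division ring $D$ is a unit of $R$, exhibiting such a constant immediately yields $\mathfrak{m}_{{\bf a}}=R$. The hypothesis points to the right candidate, namely $\sigma(a_j)a_i-\sigma(a_i)a_j$.

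First I would use the commutation rules of Definition~\ref{def:multi} --- namely $x_kb=\sigma(b)x_k$ for $b\in D$, together with $x_ix_j=x_jx_i$ --- to compute
\[
x_j(x_i-a_i)-x_i(x_j-a_j)=\big(x_jx_i-\sigma(a_i)x_j\big)-\big(x_ix_j-\sigma(a_j)x_i\big)=\sigma(a_j)x_i-\sigma(a_i)x_j.
\]
The left-hand side is a difference of the left multiples $x_j\cdot(x_i-a_i)$ and $x_i\cdot(x_j-a_j)$ of the generators, so the degree-one polynomial $\sigma(a_j)x_i-\sigma(a_i)x_j$ lies in $\mathfrak{m}_{{\bf a}}$. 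Then I would cancel its two linear terms using the constant left multiples $\sigma(a_j)(x_i-a_i)$ and $\sigma(a_i)(x_j-a_j)$, both of which also lie in $\mathfrak{m}_{{\bf a}}$:
\[
\big(\sigma(a_j)x_i-\sigma(a_i)x_j\big)-\sigma(a_j)(x_i-a_i)+\sigma(a_i)(x_j-a_j)=\sigma(a_j)a_i-\sigma(a_i)a_j.
\]
Hence $\sigma(a_j)a_i-\sigma(a_i)a_j\in\mathfrak{m}_{{\bf a}}$; by hypothesis this constant is nonzero, hence invertible in $R$, and therefore $\mathfrak{m}_{{\bf a}}=R$.

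I do not expect any serious obstacle: the argument is essentially a one-line computation once it is set up correctly. The only point requiring care is that $\mathfrak{m}_{{\bf a}}$ is a \emph{left} ideal, so the generators $x_i-a_i$ may only be multiplied on the left, and one must apply $\sigma$ on the correct side whenever a variable moves past a constant; getting the first displayed identity right is really the whole content of the proof.
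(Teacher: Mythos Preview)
Your proof is correct and is essentially the same as the paper's: summing your two steps gives precisely the element $(x_j-\sigma(a_j))(x_i-a_i)-(x_i-\sigma(a_i))(x_j-a_j)$, which the paper computes directly in one line to obtain the nonzero constant $\sigma(a_j)a_i-\sigma(a_i)a_j\in\mathfrak{m}_{{\bf a}}$.
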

\begin{proof}
Note that
\begin{align*}
\left(x_j-\sigma(a_j) \right)\left(x_i-a_i\right)-\left(x_i-\sigma(a_i)\right)\left(x_j-a_j\right)&=x_jx_i-\sigma(a_j) x_i-\sigma(a_i) x_j+\sigma(a_j) a_i-x_ix_j\\
&+\sigma(a_i) x_j+\sigma(a_j) x_i-\sigma(a_i) a_j\\
&=\sigma(a_j) a_i-\sigma(a_i) a_j \in D^\times \cap \mathfrak{m}_{{\bf a}}.
\end{align*}
Since $D^\times \subset R^\times$, it follows that $\mathfrak{m}_{{\bf a}}=R$.
\end{proof}

Lemma \ref{lem:condformanontri} implies that ``evaluation" of skew polynomials in $D[x_1,\ldots,x_n ; \sigma]$ is only meaningful on the subset of $D^n$ described in the following definition.

\begin{defn}\label{def:affine}
The \emph{$\sigma$-affine space} in $D^n$ is given by: 
$$
D^{n,\sigma}=\left\lbrace {\bf a} \in D^n \mid \sigma(a_j) a_i = \sigma(a_i) a_j\,\, \textrm{for all}\,\,1\leq i\neq j \leq n\right\rbrace.
$$ 
\end{defn}
Throughout this paper, for $d_1,\dots,d_m \in D$, we denote by $\prod_{i=1}^m d_i$ the product taken in the following order: $d_m d_{m-1} \cdots d_1
$.
Moreover, the \emph{$k$-norm} ($k \geq 0$) of $a \in D$ is defined by:
$$
{\rm N}^\sigma_k(a)=
\begin{cases}
1 & \textrm{if}\,\, k=0,\\
\prod_{i=0}^{k-1}\sigma^i(a)=\sigma^{k-1}(a)\cdots \sigma(a) a  & \textrm{if}\,\, k \geq 1.
\end{cases}
$$
The $\sigma$-\emph{evaluation} of the monomial $x_1^{k_1}\cdots x_{n}^{k_n}$ ($k_1,\dots,k_n \geq 0$) at ${\bf a} \in D^{n,\sigma}$ is then defined by:
\begin{equation}\label{eq:defeval}
\left[x_1^{k_1}\cdots x_{n}^{k_n}\right]({\bf a})=\prod_{i=1}^n \sigma^{\sum_{j=1}^{i-1}k_j}({\rm N}^\sigma_{k_i}(a_i))= \sigma^{\sum_{j=1}^{n-1}k_j}({\rm N}^\sigma_{k_n}(a_n))\cdots \sigma^{k_1}({\rm N}^\sigma_{k_2}(a_2)) {\rm N}^{\sigma}_{k_1}(a_1).
\end{equation}
By linearity, this evaluation naturally extends to any $f \in R$. As usual, we denote the corresponding evaluation by $f({\bf a})$. We note that this evaluation generalizes the one studied by Lam in \cite[\S2]{Lam86}.

Below, we prove the following result, which is analogous to \cite[Proposition 2.2]{AP23b}. 

\begin{lem}\label{lem:annulfcn} Let ${\bf a} \in D^{n,\sigma}$. Then any polynomial $g \in \mathfrak{m}_{{\bf a}}$ vanishes at ${\bf a}$.
\end{lem}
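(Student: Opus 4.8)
The plan is to reduce the claim to the one-variable computation already implicit in Lam–Leroy's framework (cited here as \cite{Lam86}), by showing that the $\sigma$-evaluation map $f \mapsto f(\mathbf{a})$, though not a ring homomorphism on all of $R$, behaves well enough on the generators $x_i - a_i$. The key observation is that $\sigma$-evaluation is left $D$-linear (this is how it is extended from monomials in the excerpt), so it suffices to prove that every polynomial of the form $g = \sum_{i=1}^n h_i \cdot (x_i - a_i)$, with $h_i \in R$, satisfies $g(\mathbf{a}) = 0$. By linearity again, it is enough to treat a single term $g = m \cdot (x_i - a_i)$ where $m = x_1^{k_1}\cdots x_n^{k_n}$ is a monomial, and then sum.

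First I would record the basic product rule for $\sigma$-evaluation of a monomial times a linear factor. Multiplying $m = x_1^{k_1}\cdots x_n^{k_n}$ on the right by $x_i$ just increments the exponent $k_i$ by one; multiplying on the right by the scalar $a_i$ produces $\sigma$-conjugates of $a_i$ according to the commutation rule $x_j a = \sigma(a) x_j$, specifically $m \cdot a_i = \sigma^{k_1+\cdots+k_n}(a_i)\, m$. So $m(x_i - a_i)$ equals $(x_1^{k_1}\cdots x_i^{k_i+1}\cdots x_n^{k_n}) - \sigma^{k_1+\cdots+k_n}(a_i)\, m$. Now I evaluate both monomials at $\mathbf{a}$ using formula \eqref{eq:defeval}. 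The crucial point is that incrementing $k_i$ in \eqref{eq:defeval} multiplies the value $[m](\mathbf{a})$ on the left by exactly $\sigma^{k_1+\cdots+k_n}(a_i)$ — this is where the hypothesis $\mathbf{a}\in D^{n,\sigma}$ is needed: only the condition $\sigma(a_j)a_i = \sigma(a_i)a_j$ guarantees that the various factors $\sigma^{?}(\mathrm{N}^\sigma_{?}(a_j))$ appearing in \eqref{eq:defeval} can be commuted past one another so that the effect of bumping exponent $i$ is a clean left multiplication by $\sigma^{\sum k_j}(a_i)$, independent of which ``slot'' the extra $x_i$ is inserted into. Granting this, $[x_1^{k_1}\cdots x_i^{k_i+1}\cdots x_n^{k_n}](\mathbf{a}) = \sigma^{k_1+\cdots+k_n}(a_i)\cdot [m](\mathbf{a})$, which is precisely $[\,\sigma^{k_1+\cdots+k_n}(a_i)\, m\,](\mathbf{a})$ by left $D$-linearity, so $\bigl(m(x_i-a_i)\bigr)(\mathbf{a}) = 0$.

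Assembling the pieces: write an arbitrary $g \in \mathfrak{m}_{\mathbf{a}}$ as $g = \sum_{i=1}^n h_i(x_i - a_i)$, expand each $h_i$ as a $D$-linear combination of monomials, distribute, apply the single-term identity above to each monomial-times-$(x_i-a_i)$ piece to see it evaluates to $0$, and conclude $g(\mathbf{a}) = 0$ by additivity of evaluation.

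I expect the main obstacle to be the bookkeeping in the monomial step: verifying carefully that, for a monomial in several commuting variables $x_j$, inserting one more $x_i$ anywhere multiplies the $\sigma$-evaluation at $\mathbf{a}$ on the left by $\sigma^{k_1+\cdots+k_n}(a_i)$, and that this is exactly where and how the defining relations $\sigma(a_j)a_i=\sigma(a_i)a_j$ of $D^{n,\sigma}$ enter. One clean way to organize this is to first prove, by induction on the number of variables or on total degree, a normal-form lemma: for $\mathbf{a}\in D^{n,\sigma}$ the evaluation $[x_1^{k_1}\cdots x_n^{k_n}](\mathbf{a})$ is independent of the order in which one ``peels off'' variables, and then the needed identity drops out as the statement that peeling off a single $x_i$ from the left contributes the factor $\sigma^{\sum_j k_j}(a_i)$. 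Everything else is linearity and distributivity, which is routine.
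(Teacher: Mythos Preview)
Your proposal is correct and follows essentially the same approach as the paper: reduce by linearity to $g = m\cdot(x_i-a_i)$ with $m$ a monomial, expand as a difference of two monomials, and verify the key identity $[x_1^{k_1}\cdots x_i^{k_i+1}\cdots x_n^{k_n}](\mathbf{a}) = \sigma^{\sum_j k_j}(a_i)\cdot[m](\mathbf{a})$ using the $D^{n,\sigma}$ relations. The paper carries out exactly the ``bookkeeping'' you anticipate, via an inductive norm identity $\mathrm{N}^\sigma_m(\sigma(a_\ell))a_i = \sigma^m(a_i)\mathrm{N}^\sigma_m(a_\ell)$ and a second induction that pushes the extra factor $\sigma^{\sum_j k_j}(a_i)$ to the far left.
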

\begin{proof} 
By linearity, it suffices to assume that $g=f(x_i-a_i)$, where $1\leq i \leq n$ and $f=x_1^{k_1} \cdots x_n^{k_n}$ for some $k_1,\dots,k_n\geq 0$. Then
\begin{equation*}
g({\bf a})=\left[x_1^{k_1} \cdots x_{i-1}^{k_{i-1}}x_i^{k_i+1}x_{i+1}^{k_{i+1}} \cdots x_{n}^{k_n}\right]({\bf a})-\sigma^{\sum_{j=1}^{n}k_j}(a_i)\left[x_1^{k_1} \cdots x_{n}^{k_n}\right]({\bf a}).
\end{equation*}

First, we claim that
\begin{equation}\label{eq:normsig24}
{\rm N}^\sigma _{m}(\sigma(a_\ell)) a_i=\sigma^{m}(a_i){\rm N}^\sigma_{m}(a_\ell)\,\, \textrm{for all}\,\, m \geq 0\,\,\textrm{and}\,\,1 \leq \ell \leq n
.
\end{equation}
We prove this by induction on $m$. The claim is trivial for $m=0$. Assume this holds for some $m\geq 0$. Then, since $\sigma(a_\ell)a_i=\sigma(a_i)a_\ell$, we have
\begin{align*}
{\rm N}_{m+1}^\sigma(\sigma(a_\ell))a_i&=\sigma^{m}(\sigma(a_\ell)){\rm N}_{m}^\sigma(\sigma(a_\ell))a_i=\sigma^{m}(\sigma(a_\ell))\sigma^{m}(a_i){\rm N}^\sigma_{m}(a_\ell)=\sigma^m(\sigma(a_\ell)a_i){\rm N}^\sigma_{m}(a_\ell)\\
&=\sigma^m(\sigma(a_i)a_\ell){\rm N}^\sigma_{m}(a_\ell)=\sigma^{m+1}(a_i)\sigma^m(a_\ell){\rm N}^\sigma_{m}(a_\ell)=\sigma^{m+1}(a_i){\rm N}^\sigma_{m+1}(a_\ell).
\end{align*} This completes the induction.

Since $\sigma$ commutes with partial norms, we have
\begin{equation}\label{eq:definiS}
S:=\left( \prod_{\ell=i+1}^{n}\sigma^{1+\sum_{j=1}^{\ell-1}k_j}({\rm N}^\sigma_{k_\ell}(a_\ell))\right)\cdot \sigma^{\sum_{j=1}^{i-1}k_j}(\sigma^{k_i}(a_i))=\left( \prod_{\ell=i+1}^{n}\sigma^{\sum_{j=1}^{\ell-1}k_j}({\rm N}^\sigma_{k_\ell}(\sigma(a_\ell)))\right)\cdot \sigma^{\sum_{j=1}^{i}k_j}(a_i).
\end{equation}

Next, we prove by induction on $0 \leq w\leq n-i$ that
\begin{equation}\label{eq:formS}
S=\left(\prod_{\ell=i+1+w}^n \sigma^{\sum_{j=1}^{\ell-1}k_j}({\rm N}^\sigma_{k_\ell}(\sigma(a_\ell)))\right)\cdot \sigma^{\sum_{j=1}^{i+w}k_j}(a_i) \cdot \left( \prod_{\ell=i+1}^{i+w}\sigma^{\sum_{j=1}^{\ell-1}k_j}({\rm N}^\sigma_{k_\ell}(a_\ell))\right)
\end{equation}
For $w=0$, this is just \eqref{eq:definiS}. Assume this holds for some $0 \leq w\leq n-i-1$. Then
\begin{align*}
S=\left(\prod_{\ell=i+2+w}^n \sigma^{\sum_{j=1}^{\ell-1}k_j}({\rm N}^\sigma_{k_\ell}(\sigma(a_\ell)))\right)\cdot \sigma^{\sum_{j=1}^{i+w}k_j}\left({\rm N}^{\sigma}_{k_{i+1+w}}(\sigma(a_\ell))a_i\right) \cdot \left( \prod_{\ell=i+1}^{i+w}\sigma^{\sum_{j=1}^{\ell-1}k_j}({\rm N}^\sigma_{k_\ell}(a_\ell))\right)
\end{align*}
Using \eqref{eq:normsig24}, we get:
\begin{align*}
S&=\left(\prod_{\ell=i+2+w}^n \sigma^{\sum_{j=1}^{\ell-1}k_j}({\rm N}^\sigma_{k_\ell}(\sigma(a_\ell)))\right)\cdot \sigma^{\sum_{j=1}^{i+w}k_j}\left(\sigma^{k_{i+1+w}}(a_i){\rm N}^\sigma_{k_{i+1+w}}(a_\ell)\right) \cdot \left( \prod_{\ell=i+1}^{i+w}\sigma^{\sum_{j=1}^{\ell-1}k_j}({\rm N}^\sigma_{k_\ell}(a_\ell))\right)\\
&=\left(\prod_{\ell=i+2+w}^n \sigma^{\sum_{j=1}^{\ell-1}k_j}({\rm N}^\sigma_{k_\ell}(\sigma(a_\ell)))\right)\cdot \sigma^{\sum_{j=1}^{i+w+1}k_j}(a_i) \cdot \left( \prod_{\ell=i+1}^{i+w+1}\sigma^{\sum_{j=1}^{\ell-1}k_j}({\rm N}^\sigma_{k_\ell}(a_\ell))\right),
\end{align*}
which completes the induction. 

Taking $m=n-i$ in \eqref{eq:formS}, we obtain:
\begin{align}\label{eq:finalS}
S=
\sigma^{\sum_{j=1}^n k_j}(a_i)\left( \prod_{\ell=i+1}^{n}\sigma^{\sum_{j=1}^{\ell-1}k_j}({\rm N}^\sigma_{k_\ell}(a_\ell))\right).
\end{align}

Consequently:
\begin{align*}
\left[x_1^{k_1} \cdots x_{i-1}^{k_{i-1}}x_i^{k_i+1}x_{i+1}^{k_{i+1}} \cdots x_{n}^{k_n}\right]({\bf a})
&=\left( \prod_{\ell=i+1}^{n}\sigma^{\sum_{j=1}^{\ell-1}k_j+1}({\rm N}^\sigma_{k_\ell}(a_\ell))\right)\times\sigma^{\sum_{j=1}^{i-1}k_j}(\underbrace{{\rm N}^\sigma_{k_i+1}(a_i)}_{=\sigma^{k_i}(a_i){\rm N}^\sigma_{k_i}(a_i)})\\
&\times \left(\prod_{\ell=1}^{i-1}\sigma^{\sum_{j=1}^{\ell-1}k_j}({\rm N}^\sigma_{k_\ell}(a_\ell))\right)\\
&= S \cdot \left(\prod_{\ell=1}^i\sigma^{\sum_{j=1}^{\ell-1}k_j}({\rm N}^\sigma_{k_\ell}(a_\ell))\right)\\
&= \sigma^{\sum_{j=1}^n k_j}(a_i) \left(\prod_{\ell=1}^n\sigma^{\sum_{j=1}^{\ell-1}k_j}({\rm N}^\sigma_{k_\ell}(a_\ell))\right) \quad \textrm{using \eqref{eq:finalS}} \\
&=\sigma^{\sum_{j=1}^{n}k_j}(a_i)\left[x_1^{k_1} \cdots x_{n}^{k_n}\right]({\bf a}).
\end{align*}

Therefore $g({\bf a})=0$.
\end{proof}

Next, we have

\begin{prop}\label{prop:maproper}
Let ${\bf a} \in D^n$. Then $\mathfrak{m}_{{\bf a}}$ is a proper left ideal of $R$ if and only if ${\bf a} \in D^{n,\sigma}$. In this case, $\mathfrak{m}_{{\bf a}}$ is maximal. 
\end{prop}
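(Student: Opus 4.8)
The statement has two directions plus a maximality claim. The plan is to handle the two directions separately, with Lemma~\ref{lem:condformanontri} and Lemma~\ref{lem:annulfcn} doing most of the work, and then to upgrade properness to maximality by exhibiting $R/\mathfrak{m}_{\bf a}$ as a division ring.

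\emph{The easy direction.} Suppose ${\bf a} \notin D^{n,\sigma}$, i.e. $\sigma(a_j)a_i \neq \sigma(a_i)a_j$ for some $i \neq j$. Then Lemma~\ref{lem:condformanontri} gives $\mathfrak{m}_{\bf a} = R$, so $\mathfrak{m}_{\bf a}$ is not proper. This is the contrapositive of the ``only if'' direction, so nothing more is needed here.

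\emph{The main direction.} Suppose ${\bf a} \in D^{n,\sigma}$; I must show $\mathfrak{m}_{\bf a} \neq R$. The natural idea is to use evaluation at ${\bf a}$ as an obstruction: by Lemma~\ref{lem:annulfcn}, every $g \in \mathfrak{m}_{\bf a}$ satisfies $g({\bf a}) = 0$, whereas the constant polynomial $1$ evaluates to $1 \neq 0$, so $1 \notin \mathfrak{m}_{\bf a}$ and the ideal is proper. The one subtlety to check carefully is that $g \mapsto g({\bf a})$ is well-defined on all of $R$ — which it is, since $R$ is a free left $D$-module on the monomials $x_1^{k_1}\cdots x_n^{k_n}$ and evaluation was defined by $D$-linear extension of \eqref{eq:defeval} — and that Lemma~\ref{lem:annulfcn} indeed applies to every element of $\mathfrak{m}_{\bf a}$, which is immediate from its statement since $\mathfrak{m}_{\bf a} = \sum_i R(x_i - a_i)$ and the lemma was proved for exactly such elements by linearity. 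So this direction is short.

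\emph{Maximality.} It remains to show that when ${\bf a} \in D^{n,\sigma}$, the proper left ideal $\mathfrak{m}_{\bf a}$ is maximal. The cleanest route is to argue that the evaluation-at-${\bf a}$ map induces an isomorphism of left $D$-modules $R/\mathfrak{m}_{\bf a} \cong D$: surjectivity is clear (constants evaluate to themselves), and Lemma~\ref{lem:annulfcn} shows $\mathfrak{m}_{\bf a}$ is contained in the kernel; for the reverse inclusion I would reduce an arbitrary $f$ modulo $\mathfrak{m}_{\bf a}$ to a constant by repeatedly rewriting $x_i = (x_i - a_i) + a_i$ and pushing the $a_i$'s past lower monomials using the commutation rule $x_i a = \sigma(a)x_i$, so that $f \equiv f({\bf a}) \pmod{\mathfrak{m}_{\bf a}}$; then $f \in \ker$ forces $f({\bf a})=0$, hence $f \in \mathfrak{m}_{\bf a}$. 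Since $R/\mathfrak{m}_{\bf a} \cong D$ as a left $D$-module is one-dimensional, any left ideal strictly containing $\mathfrak{m}_{\bf a}$ corresponds to a nonzero $D$-submodule of $D$, which is all of $D$; hence $\mathfrak{m}_{\bf a}$ is maximal. I expect the main (though still routine) obstacle to be the bookkeeping in the reduction $f \equiv f({\bf a})$: one has to verify that the particular evaluation convention \eqref{eq:defeval}, with its $\sigma$-twists and right-to-left product ordering, is exactly the one that comes out of this reduction process — essentially the same computation that underlies Lemma~\ref{lem:annulfcn}, now run in the opposite direction. An alternative that sidesteps even this is to invoke Lemma~\ref{lem:annulfcn} together with a dimension count: $R/\mathfrak{m}_{\bf a}$ is a nonzero quotient and is generated as a left $D$-module by the images of monomials, each of which is congruent modulo $\mathfrak{m}_{\bf a}$ to a scalar, so $\dim_D(R/\mathfrak{m}_{\bf a}) = 1$ and maximality follows.
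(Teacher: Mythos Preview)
Your proposal is correct and matches the paper's proof almost exactly for both directions of the equivalence: the paper uses Lemma~\ref{lem:condformanontri} for one direction and Lemma~\ref{lem:annulfcn} (applied to the constant $1$) for the other, precisely as you do. For maximality the paper is terser and slightly more direct---given $g \notin \mathfrak{m}_{\bf a}$, it invokes right-hand division with remainder to write $g = f + \ell$ with $f \in \mathfrak{m}_{\bf a}$ and $\ell \in D^\times$, whence $1 = -\ell^{-1}f + \ell^{-1}g \in \mathfrak{m}_{\bf a} + Rg$---but this is exactly your reduction $f \equiv f({\bf a}) \pmod{\mathfrak{m}_{\bf a}}$ followed by the observation that a nonzero constant is a unit, just phrased without passing through the quotient; the content is identical.
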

\begin{proof} Assume that ${\mathfrak{m}}_{{\bf a}}$ is proper. Then, by Lemma \ref{lem:condformanontri}, it follows that ${\bf a} \in D^{n,\sigma}$. Conversely, assume that ${\bf a} \in D^{n,\sigma}$. Since $1$ does not vanish at ${\bf a}$, Lemma \ref{lem:annulfcn} implies that $1 \notin \mathfrak{m}_{{\bf a}}$, and hence $\mathfrak{m}_{{\bf a}}$ is proper. This completes the proof of the first statement. 

For the second statement, suppose $g \notin \mathfrak{m}_{{\bf a}}$. Via right-hand division with remainder, we can write $g=f+\ell$ for some $f \in \mathfrak{m}_{{\bf a}}$ and $\ell \in D^\times$. Hence, $1=-\ell^{-1}f + \ell^{-1}g \in \mathfrak{m}_{{\bf a}}+R \cdot g$. Thus, $\mathfrak{m}_{{\bf a}}+R \cdot g=R$ so that $\mathfrak{m}_{{\bf a}}$ is maximal.
\end{proof}

As in the commutative case, the evaluation defined in \eqref{eq:defeval} is given by the residue modulo $\mathfrak{m}_{{\bf a}}$.

\begin{lem}\label{lem:evalformon}
Let $f \in R$ and ${\bf a} \in D^{n,\sigma}$. Then $f({\bf a})$ is the unique $\ell \in D$ such that $f-\ell \in \mathfrak{m}_{{\bf a}}$.
\end{lem}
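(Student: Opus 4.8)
The plan is to establish existence and uniqueness separately. For existence, set $\ell := f({\bf a}) \in D$ and consider $g := f - \ell$. Since evaluation is $D$-linear and $[1]({\bf a}) = 1$ (the empty-monomial case $k_1 = \cdots = k_n = 0$ in \eqref{eq:defeval}), we get $g({\bf a}) = f({\bf a}) - \ell \cdot 1 = 0$. So it suffices to show that any $g \in R$ with $g({\bf a}) = 0$ lies in $\mathfrak{m}_{{\bf a}}$. The natural approach is to mimic the commutative argument: perform iterated right-hand division with remainder by the elements $x_1 - a_1, \dots, x_n - a_n$. Concretely, using right division in the variable $x_n$ (thinking of $R$ as a skew polynomial ring over $D[x_1,\dots,x_{n-1};\sigma]$ in $x_n$, with the caveat that one must divide on the correct side so that the remainder has $x_n$-degree $0$), then recursively in $x_{n-1}, \dots, x_1$, one writes $g = h + c$ with $h \in \mathfrak{m}_{{\bf a}}$ and $c \in D$. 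By Lemma \ref{lem:annulfcn}, $h$ vanishes at ${\bf a}$, hence $c = g({\bf a}) - h({\bf a}) = 0$, so $g = h \in \mathfrak{m}_{{\bf a}}$.

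For uniqueness, suppose $f - \ell \in \mathfrak{m}_{{\bf a}}$ and $f - \ell' \in \mathfrak{m}_{{\bf a}}$ for $\ell, \ell' \in D$. Subtracting, $\ell' - \ell \in \mathfrak{m}_{{\bf a}}$. If $\ell' \neq \ell$ then $\ell' - \ell \in D^\times \subseteq R^\times$, forcing $\mathfrak{m}_{{\bf a}} = R$, which contradicts Proposition \ref{prop:maproper} since ${\bf a} \in D^{n,\sigma}$. Hence $\ell = \ell'$. (Alternatively: $\ell - \ell' \in \mathfrak{m}_{{\bf a}}$ vanishes at ${\bf a}$ by Lemma \ref{lem:annulfcn}, and the evaluation of the constant $\ell - \ell'$ is $\ell - \ell'$, so $\ell = \ell'$ directly.)

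The main obstacle is the existence step, and specifically making the multivariate right-division reduction precise: one must check that dividing $g$ on the right by $x_i - a_i$ genuinely reduces the $x_i$-degree while not increasing the degrees in the other variables, and that since the $x_i$ commute with each other, the reductions in different variables do not interfere. A cleaner alternative that avoids a full induction on division is to reduce to monomials: by $D$-linearity it is enough to show that each monomial $x_1^{k_1}\cdots x_n^{k_n}$ is congruent modulo $\mathfrak{m}_{{\bf a}}$ to a constant (namely its evaluation at ${\bf a}$), and this follows by repeatedly using the identities $x_i \equiv a_i \pmod{\mathfrak{m}_{{\bf a}}}$ together with the multiplication rule $x_i \cdot a = \sigma(a) x_i$ to collapse a monomial from the left one factor at a time; the bookkeeping that the resulting constant equals \eqref{eq:defeval} is exactly the computation already carried out in the proof of Lemma \ref{lem:annulfcn}, so in fact the cleanest route is simply to invoke Lemma \ref{lem:annulfcn} as above and only do the division argument in enough generality to produce \emph{some} decomposition $g = h + c$.
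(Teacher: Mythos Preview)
Your proposal is correct and follows essentially the same approach as the paper: both arguments use iterated right-hand division with remainder by the $x_i - a_i$ to produce a decomposition into an element of $\mathfrak{m}_{{\bf a}}$ plus a constant, and then invoke Lemma \ref{lem:annulfcn} to identify that constant. The only difference is cosmetic: the paper divides $f$ directly to get $f = g + \ell$ and then reads off $\ell = f({\bf a})$ from Lemma \ref{lem:annulfcn}, whereas you first set $\ell = f({\bf a})$ and then divide $f - \ell$; this extra step is harmless but unnecessary, and your discussion of the ``main obstacle'' is more caution than the paper exercises (it simply asserts the division with remainder exists, which is standard).
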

\begin{proof}
Since $\mathfrak{m}_{{\bf a}}$ is a proper ideal of $R$, such an $\ell$, if it exists, must be unique. It remains to prove that $f-f({\bf a}) \in \mathfrak{m}_{{\bf a}}$. Indeed, via right-hand division with remainder, we can write $f=g+\ell$ for some $g \in \mathfrak{m}_{{\bf a}}$ and $\ell \in D$. By Lemma \ref{lem:annulfcn}, we have $f({\bf a})=g({\bf a})+\ell=\ell$, and so $f({\bf a})=\ell$. Thus $f-f({\bf a})=g \in \mathfrak{m}_{{\bf a}}$.  

\end{proof}
\begin{defn}
Let ${\bf a}=(a_1,\dots,a_n) \in D^n$ and let $b \in D^\times$. The \emph{$\sigma$-conjugate of ${\bf a}$ by $b$} is given by: 
$${\bf a}^b=(\sigma(b) a_1 b^{-1},\dots, \sigma(b) a_n b^{-1}).
$$
\end{defn}
As in the one variable case \cite{VL88}, we have the following product formula.
\begin{lem}[Product formula]\label{lem:prodformskewmult}
Let $f,g \in R$ and let ${\bf a} \in D^{n,\sigma}$. Then
$$
(f \cdot g)({\bf a})=
\begin{cases}
0 & \textrm{if}\,\, g({\bf a})=0,\\
f({{\bf a}^{g(\bf a)}})g({\bf a}) &\textrm{if}\,\, g({\bf a}) \neq 0. 
\end{cases}
$$
\end{lem}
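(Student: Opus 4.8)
The plan is to reduce the statement, by linearity, to the case where $f$ is a monomial $x_1^{\ell_1}\cdots x_n^{\ell_n}$ and $g$ is arbitrary, and then to exploit the characterization of evaluation as residue modulo $\mathfrak{m}_{{\bf a}}$ given in Lemma~\ref{lem:evalformon}. Concretely, writing $c = g({\bf a})$, I want to show $(fg)({\bf a}) = f({\bf a}^c)\,c$ when $c \neq 0$, and $(fg)({\bf a}) = 0$ when $c = 0$. The case $c = 0$ is immediate: by Lemma~\ref{lem:evalformon}, $g \in \mathfrak{m}_{{\bf a}}$, hence $fg \in \mathfrak{m}_{{\bf a}}$ (a left ideal is closed under left multiplication by $f$), so $(fg)({\bf a}) = 0$ by Lemma~\ref{lem:annulfcn}. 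This disposes of the first branch with no computation, and I only need to work with $c \in D^\times$.

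For the main branch, the key observation is a ``commutation identity'' for passing a scalar through a monomial evaluation. Using $xa = \sigma(a)x$ repeatedly, one has, for any monomial $M = x_1^{\ell_1}\cdots x_n^{\ell_n}$ and any $d \in D$, the identity $M \cdot d = \sigma^{\,\ell_1+\cdots+\ell_n}(d)\cdot M$ in $R$. Apply this with $d = c = g({\bf a})$. First I would write $g - c \in \mathfrak{m}_{{\bf a}}$ (Lemma~\ref{lem:evalformon}), so $fg = f(g-c) + fc = f(g-c) + Mc = f(g-c) + \sigma^{L}(c)\,M$, where $L = \sum_i \ell_i = \deg(M)$ and the first summand lies in $\mathfrak{m}_{{\bf a}}$, hence evaluates to $0$. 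Thus $(fg)({\bf a}) = \big(\sigma^{L}(c)\,M\big)({\bf a}) = \sigma^{L}(c)\cdot M({\bf a})$ — wait, this is not yet what we want; it should equal $f({\bf a}^c)\,c = M({\bf a}^c)\,c$. So the heart of the matter is the identity
\[
M({\bf a}^c)\,c \;=\; \sigma^{\deg M}(c)\cdot M({\bf a}),
\]
for every monomial $M$ and every $c \in D^\times$, where ${\bf a}^c = (\sigma(c)a_1 c^{-1},\dots,\sigma(c)a_n c^{-1})$. I expect this to be the main obstacle, and I would prove it by a direct induction on $\deg M$, or equivalently by first proving it for single-variable monomials $x_i^{k}$ and then combining. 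For a single variable, ${\rm N}^\sigma_k(\sigma(c)a_i c^{-1})$ telescopes: each factor $\sigma^j(\sigma(c)a_i c^{-1}) = \sigma^{j+1}(c)\,\sigma^j(a_i)\,\sigma^j(c)^{-1}$, and in the product $\prod_{j=0}^{k-1}\sigma^j(\sigma(c)a_i c^{-1})$ (taken in the decreasing-index order fixed in the paper) the inner $\sigma^j(c)^{-1}$ cancels against the next $\sigma^j(c)$, leaving $\sigma^{k}(c)\,{\rm N}^\sigma_k(a_i)\,c^{-1}$. This gives ${\rm N}^\sigma_k(\sigma(c)a_i c^{-1})\,c = \sigma^k(c)\,{\rm N}^\sigma_k(a_i)$, which is the single-variable instance. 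For a general monomial one chains these telescopings across the $n$ blocks, tracking the $\sigma^{\sum_{j<i}\ell_j}$ twists exactly as in the definition \eqref{eq:defeval}; the bookkeeping is the same flavor as the computation already carried out in the proof of Lemma~\ref{lem:annulfcn}, so I would either mimic that argument or — more cleanly — induct on the number of variables, peeling off $x_1^{\ell_1}$ using the single-variable case together with the fact that $\sigma$ commutes with partial norms.

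Finally, I should double-check the one subtlety: the definition of ${\bf a}^c$ uses $c = g({\bf a})$, so I must confirm that ${\bf a}^c \in D^{n,\sigma}$ whenever ${\bf a} \in D^{n,\sigma}$ and $c \in D^\times$ (otherwise $M({\bf a}^c)$ is not even defined); this is a short direct check from $\sigma(\sigma(c)a_j c^{-1})(\sigma(c)a_i c^{-1}) = \sigma^2(c)\sigma(a_j)\sigma(c)^{-1}\sigma(c)a_i c^{-1} = \sigma^2(c)\sigma(a_j)a_i c^{-1}$ and the corresponding expression with $i,j$ swapped, using $\sigma(a_j)a_i = \sigma(a_i)a_j$. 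With the telescoping identity and this well-definedness in hand, combining the two displayed equalities above yields $(fg)({\bf a}) = \sigma^{\deg M}(c)\,M({\bf a}) = M({\bf a}^c)\,c = f({\bf a}^c)\,g({\bf a})$, and linearity in $f$ extends the conclusion from monomials to all $f \in R$, completing the proof.
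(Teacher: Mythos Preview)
Your argument is correct. The case $c=0$ is identical to the paper's, and for $c\neq 0$ your telescoping computation does go through: writing $b_i=\sigma(c)a_ic^{-1}$, one has ${\rm N}^\sigma_k(b_i)=\sigma^k(c)\,{\rm N}^\sigma_k(a_i)\,c^{-1}$, and chaining across the $n$ blocks (with the $\sigma^{\sum_{j<i}k_j}$ twists) gives another telescoping that yields $M({\bf a}^c)=\sigma^{\deg M}(c)\,M({\bf a})\,c^{-1}$, exactly the identity you need. Your check that ${\bf a}^c\in D^{n,\sigma}$ is also needed and correct.

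The paper takes a different and somewhat slicker route. Rather than reducing $f$ to a monomial and computing norms, it keeps $f$ arbitrary and writes \emph{both} $f$ and $g$ via division with remainder: $g=\sum_i A_i(x_i-a_i)+c$ and $f=\sum_i B_i(x_i-b_i)+f({\bf b})$ with ${\bf b}={\bf a}^c$. The whole proof then rests on the single one-line identity
\[
(x_i-b_i)\,c=(x_i-\sigma(c)a_ic^{-1})\,c=\sigma(c)\,(x_i-a_i),
\]
which immediately converts the $\mathfrak{m}_{{\bf b}}$-part of $f$, after right-multiplying by $c$, into an element of $\mathfrak{m}_{{\bf a}}$. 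No norm computation or induction on degree is required. Your approach, by contrast, trades this algebraic trick for an explicit telescoping identity $M({\bf a}^c)c=\sigma^{\deg M}(c)\,M({\bf a})$; this is more computational but has the merit of isolating a concrete formula relating evaluation at ${\bf a}$ and at ${\bf a}^c$, which could be of independent use.
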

\begin{proof}
We adapt the proof of the one variable case (\cite[Theorem 2.7]{VL88}) to the multivariate setting. If $g({\bf a})=0$ (that is, $g \in \mathfrak{m}_{{\bf a}}$), then $f \cdot g \in \mathfrak{m}_{{\bf a}}$, and so $(f\cdot g)({\bf a})=0$. Now, assume that $g({\bf a}) \neq 0$. Set $c=g({\bf a})$ and ${\bf b}={\bf a}^{c}$. Write
$$
g=\sum_{i=1}^n A_i\cdot (x_i-a_i)+c
$$
and
$$
f=\sum_{i=1}^n B_i\cdot (x_i-b_i)+f({\bf b}),
$$ where $A_1,\dots,A_n, B_1,\dots,B_n \in R$. Note that
$$
(x_i-b_i)\cdot c=(x_i-\sigma(c) a_i c^{-1})c=\sigma(c)(x_i-a_i),
$$ for all $1 \leq i \leq n$.
Therefore
\begin{align*}
f\cdot g&=\sum_{i=1}^nf \cdot A_i \cdot (x_i-a_i)+f \cdot c\\
&=\sum_{i=1}^n f\cdot A_i \cdot (x_i-a_i)+\sum_{i=1}^n B_i \cdot (x_i-b_i)c +f({\bf b})c\\
   &=\underbrace{\sum_{i=1}^n f\cdot A_i \cdot (x_i-a_i)+\sum_{i=1}^n B_i \cdot \sigma(c) \cdot (x_i-a_i)}_{\in \mathfrak{m}_{{\bf a}}} +f({\bf b})c.
\end{align*}
Evaluating both sides at ${\bf a}$, we obtain
$$
(f \cdot g)({\bf a})=f({\bf b})c=f({\bf a}^{g({\bf a})})g({\bf a}).
$$
\end{proof}
\begin{rmk}\label{substitution} It is straightforward to show that as in the commutative case, for any point ${\bf a} = (a_1,\ldots,a_n) \in D^{n,\sigma}$ the substitution $f \mapsto f({\bf a})$ from $D[x_1,\ldots,x_n;\sigma] \to D$ is the composition of the substitution map $x_n \mapsto a_n$ from $D[x_1,\ldots,x_n;\sigma]$ to $D[x_1,\ldots,x_{n-1};\sigma]$ and of the map $(x_1,\ldots,x_{n-1}) \mapsto (a_1,\ldots,a_{n-1})$ from $D[x_1,\ldots,x_{n-1};\sigma]$ to $D$. \end{rmk}

\subsection{The one variable case} In this part we focus on the one variable case. 
\begin{defn}
Let $0 \neq f,g \in D[x;\sigma]$. The \emph{ left-hand least common multiple} of $f,g$, denoted by $\lcm(f,g)$, is the monic polynomial of minimal degree that is divisible from the right by both $f$ and $g$.
\end{defn}
\begin{rmk}
 The polynomial $\lcm(f,g)$ always exists and is uniquely determined by $f$ and $g$ \cite[p.~485]{Ore33}. Moreover, if $f = x-a,g = x-b$ are monic linear polynomials, then $\lcm(f,g)$ is the monic polynomial of smallest degree that vanishes at both $a$ and $b$ (since a polynomial $p \in D[x,\sigma]$ is divisible by $x-a$ from the right if and only if $p(a) = 0$). 
\end{rmk}
More generally, 
\begin{defn}
Let $S \subseteq D[x;\sigma]$. Assume that there exists a non-zero polynomial that is right-hand divisible by all polynomials in $S$. Then such a polynomial of minimal degree\footnote{This polynomial is uniquely determined by $S$.} is called the \emph{left-hand least common multiple} of the elements of $S$, and is denoted by $\lcm(S)$. 
\end{defn}

\begin{lem}\label{union} Let $S \subseteq D[x;\sigma]$, and suppose that $g = \lcm(S)$ exists. A polynomial $f \in D[x;\sigma]$ is right-hand divisible by all polynomials in $S$ if and only if $f$ is right-hand divisible by $g$. 
\end{lem}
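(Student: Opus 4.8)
The plan is to translate right-hand divisibility into membership in a left ideal and to use that $R := D[x;\sigma]$ is a left principal ideal domain. Recall that $R$ admits right-hand division with remainder: for $p \in R$ and $0 \neq s \in R$ there exist $q,r \in R$ with $p = qs + r$ and $\deg r < \deg s$; one clears leading terms, which is possible because $D$ is a division ring and $\sigma \in \Aut(D)$ is bijective. From this it follows in the usual way that every left ideal of $R$ has the form $Rh$: pick $h$ of minimal degree among the nonzero elements of the ideal, and divide any other element by $h$. Also, by the very definition of right-hand divisibility, a polynomial $p$ is right-hand divisible by $s$ if and only if $p \in Rs$.

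First I would dispatch the ``if'' direction. If $f \in Rg$, write $f = qg$; since $g = \lcm(S)$ is by definition right-hand divisible by every $s \in S$, say $g = q_s s$, we get $f = q q_s s$, so $f$ is right-hand divisible by each $s \in S$. This is just transitivity of right-hand divisibility and needs no PID structure.

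For the converse, I would introduce the left ideal
$$I := \bigcap_{s \in S} Rs,$$
which is precisely the set of polynomials right-hand divisible by every element of $S$. By hypothesis a nonzero common right-multiple exists, so $I \neq 0$, and by the remark above $I = Rh$ for some nonzero $h$ of minimal degree in $I$. Now $g = \lcm(S)$ is, by definition, a nonzero element of $I$ of minimal degree, so $\deg g = \deg h$; and since $g \in I = Rh$ we may write $g = uh$, whence comparing degrees forces $u \in D^\times$. Therefore $Rg = Rh = I$, and consequently $f$ is right-hand divisible by all elements of $S$ $\iff f \in I = Rg \iff f$ is right-hand divisible by $g$, as claimed.

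I do not expect a genuine obstacle here; the only points to nail down are bookkeeping: (i) the right-hand division algorithm in $D[x;\sigma]$ (standard, using that $D$ is a division ring and $\sigma$ an automorphism), which makes $R$ a left PID uniformly even when $S$ is infinite, and (ii) matching conventions, namely that ``$s$ right-divides $p$'' means $p \in Rs$ and that $\lcm(S)$ is exactly a monic generator of $I$. Once these are in place the argument is purely ideal-theoretic and entirely insensitive to the cardinality of $S$.
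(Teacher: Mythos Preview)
Your proof is correct and follows essentially the same approach as the paper: both use right-hand division with remainder together with the minimality of $\deg g$ to force the remainder to vanish. You package this via the left PID structure (showing $Rg = \bigcap_{s\in S} Rs$), whereas the paper divides $f$ by $g$ directly and observes that the remainder $r = f - qg$ is itself a common right multiple of smaller degree, hence zero; but the underlying idea is identical.
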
 
\begin{proof} 
Assume that $f$ is right-hand divisible by $g$. Then, by the definition of $g$, the polynomial $f$ is right-hand divisible by all polynomials in $S$.

Conversely, assume that $f$ is right-hand divisible by all polynomials in $S$. Then, by right-hand division with remainder we may write $f = qg+r$ with $\deg(r) < \deg(g)$. Hence $r = f-qg$ is right-hand divisible by all polynomials in $S$, and by the minimality of the degree of $g$ we must have $r = 0$. Therefore $f$ is right-hand divisible by $g$. 
\end{proof}

Following \cite[\S2]{VL88}, we define:
\begin{defn}
Let $A$ be a subset of $D$. We say that $A$ is \emph{ $\sigma$-algebraic}, if there exists a non-zero polynomial in $D[x;\sigma]$ that vanishes at all points of $A$. Equivalently, $A$ is $\sigma$-algebraic if $\lcm\{x-a \st a \in A\}$ exists. 
\end{defn}
\begin{rmk}
Every finite set in $D$ is $\sigma$-algebraic, but infinite $\sigma$-algebraic sets are possible. For example, if $D = \HH$ is the real quaternion algebra and $\sigma$ is the identity automorphism, then the set $$\{a \fri + b \frj + c \frk \st a,b,c \in \mathbb{R}, a^2+b^2+c^2 = 1\}$$ is $\sigma$-algebraic, with minimal polynomial $x^2+1$. Or, if $D = \C$ is the field of complex numbers and $\sigma$ is the usual complex conjugation, then the set 
$$\{z \in \C \st |z| = 1\}$$
is $\sigma$-algebraic, with minimal polynomial $x^2-1$.
\end{rmk}

\begin{defn}[Rank of a $\sigma$-algebraic set]\label{def:rank} Let $A$ be a $\sigma$-algebraic subset of $D$. The polynomial $\lcm\{x-a \st a \in A\} \in D[x;\sigma]$ is  called the \emph{ $\sigma$-minimal polynomial of $A$} and we shall denote it by $f_{A,\sigma}$. We shall call the \emph{degree} of $f_{A,\sigma}$ the \emph{$\sigma$-rank} of the set $A$, and denote it by $\rk_\sigma(A)$.
\end{defn}

The theory of $\sigma$-algebraic sets and their ranks was developed in \cite{Lam86}, \cite{LL04} and \cite{LaL08} (in greater generality, in the context of skew polynomial rings with an endomorphism and a derivation), but we shall not need any further results from there here. 
\begin{lem}\label{divides} Let $A$ be a non-empty $\sigma$-algebraic subset of $D$. Let $a \in A$. The polynomial $\big(\lcm(x-b^{b-a} \mid b \in A\setminus\{a\})\big)\cdot(x-a)$ is right-hand divisible in $D[x;\sigma]$ by $\lcm\{x-b \mid b \in A\}$. 
\end{lem}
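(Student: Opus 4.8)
The plan is to convert the asserted right-divisibility into a pointwise vanishing condition via Lemma \ref{union}, and then verify that vanishing with the product formula (Lemma \ref{lem:prodformskewmult}). Write $h=\lcm\{x-b\mid b\in A\}=f_{A,\sigma}$ (this exists since $A$ is $\sigma$-algebraic), write $\ell=\lcm(x-b^{b-a}\mid b\in A\setminus\{a\})$, and set $P=\ell\cdot(x-a)$. Applying Lemma \ref{union} to $S=\{x-b\mid b\in A\}$, whose left-hand lcm is $h$, we see that $h$ right-divides $P$ if and only if every $x-b$ with $b\in A$ right-divides $P$; since $x-b$ right-divides a polynomial exactly when that polynomial vanishes at $b$, this amounts to showing $P(b)=0$ for all $b\in A$.

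I would then check this in two cases. For $b=a$: since $(x-a)(a)=0$, the product formula gives $P(a)=(\ell\cdot(x-a))(a)=0$. For $b\in A\setminus\{a\}$: here $(x-a)(b)=b-a\neq 0$, so the product formula gives $P(b)=\ell\bigl(b^{(x-a)(b)}\bigr)\cdot(x-a)(b)=\ell\bigl(b^{b-a}\bigr)\cdot(b-a)$; and since $x-b^{b-a}$ is one of the linear polynomials in the list defining $\ell$, it right-divides $\ell$, so $\ell(b^{b-a})=0$ and hence $P(b)=0$. This establishes the claim.

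One loose end is the existence of $\ell$, i.e.\ that $\{b^{b-a}\mid b\in A\setminus\{a\}\}$ is $\sigma$-algebraic (the case $A=\{a\}$ being trivial with $\ell=1$). This I would handle in the same spirit: since $h(a)=0$, write $h=h_1\cdot(x-a)$; then for each $b\in A\setminus\{a\}$ the product formula applied to $0=h(b)=h_1(b^{b-a})(b-a)$ together with $b-a\neq 0$ forces $h_1(b^{b-a})=0$, so $h_1$ witnesses $\sigma$-algebraicity of that set. I do not anticipate a genuine obstacle; the only delicate points are bookkeeping the $\sigma$-conjugate in the product formula (the conjugating element is precisely $(x-a)(b)=b-a$, so the conjugate point is exactly $b^{b-a}$) and recording this existence statement for $\ell$.
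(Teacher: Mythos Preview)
Your proof is correct and follows essentially the same route as the paper's: reduce via Lemma~\ref{union} to checking that $P=\ell\cdot(x-a)$ vanishes at every $b\in A$, then handle $b=a$ trivially and $b\neq a$ via the product formula. The only cosmetic difference is that the paper first factors $P$ through the quadratic $(x-b^{b-a})(x-a)$ and evaluates that at $b$, whereas you apply the product formula directly to $\ell\cdot(x-a)$; the underlying computation is identical. Your explicit verification that $\ell$ exists (by writing $h=h_1\cdot(x-a)$ and showing $h_1$ annihilates $\{b^{b-a}:b\in A\setminus\{a\}\}$) is a point the paper passes over in silence, so including it is a plus.
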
 
\begin{proof} Let $h = \lcm\{x-b^{b-a} \mid b \in A \setminus\{a\}\}$. By Lemma \ref{union}, we must show that $g = h\cdot(x-a)$ is right-hand divisible by $x-b$ for all $b \in A$. For $b = a$ this is evident. For a given $b \in A\setminus \{a\}$, since $h$ is right-hand divisible by $x-b^{b-a}$, it follows that $g$ is right-hand divisible by $p = (x-b^{b-a})(x-a)$. By the product formula (Lemma \ref{lem:prodformskewmult} or \cite[Theorem 2.7]{VL88}), we have $p(b) = (b^{b-a}-b^{b-a})(b-a) = 0$, hence $p$ is divisible by $x-b$, and hence so is $g$. \end{proof}

\section{Skew Combinatorial Nullstellensatz} \label{sec_central}

In this section, we establish the {\bf skew Combinatorial Nullstellensatz}. For that, let us fix a division ring $D$ and an automorphism $\sigma$ of $D$. 

\begin{thm}[Skew Combinatorial Nullstellensatz]\label{new2} Let $p \in D[x_1,\ldots,x_n;\sigma]$ be of total degree $\deg(p)=\sum_{i=1}^n {k_i}$, where each $k_i$ is a non-negative integer, such that the coefficient of $x_1^{k_1}\cdots x_n^{k_n}$ in $p$ is non-zero. Let $A_1,\ldots,A_n$ be $\sigma$-algebraic subsets of $D$ such that $A_1 \times \dots \times A_n \subseteq D^{n,\sigma}$ and that $\rk_\sigma(A_i) > k_i$ for all $1 \leq i \leq n$. Then there is a point in $A_1 \times \dots \times A_n$ at which $p$ does not vanish. 
\end{thm}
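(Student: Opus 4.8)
The plan is to mimic the classical induction-on-$n$ proof of Alon's Combinatorial Nullstellensatz, but carried out carefully in the noncommutative setting using the substitution/residue machinery established in \S\ref{sec:prelim}. The base case $n=1$ is essentially a one-variable statement about $D[x;\sigma]$: if $p \in D[x;\sigma]$ has degree $k_1$ with nonzero leading coefficient, and $A_1$ is $\sigma$-algebraic with $\rk_\sigma(A_1) > k_1$, then $p$ cannot vanish on all of $A_1$. Indeed, if it did, then $p$ would be right-hand divisible by $\lcm\{x-a \mid a \in A_1\} = f_{A_1,\sigma}$ by Lemma \ref{union} (since right-hand divisibility by $x-a$ is equivalent to vanishing at $a$), and since $\deg f_{A_1,\sigma} = \rk_\sigma(A_1) > k_1 = \deg p$ while $p \neq 0$, this is impossible. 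So the base case is immediate from the $\lcm$-theory already developed.

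For the inductive step, I would write $p = \sum_{j=0}^{k_n} p_j \cdot x_n^{j}$ where the coefficients $p_j$ lie (after moving powers of $x_n$ past coefficients via the $x_n a = \sigma(a) x_n$ rule) in $D[x_1,\ldots,x_{n-1};\sigma]$; here $p_{k_n}$ is nonzero and, by hypothesis, the coefficient of $x_1^{k_1}\cdots x_{n-1}^{k_{n-1}}$ in $p_{k_n}$ is nonzero, so $\deg p_{k_n} = k_1 + \cdots + k_{n-1}$ with the right leading monomial. Using Remark \ref{substitution}, for a fixed ${\bf a}' = (a_1,\ldots,a_{n-1}) \in A_1\times\cdots\times A_{n-1}$ the substitution $p(a_1,\ldots,a_{n-1},x_n)$ is a well-defined element of $D[x_n;\sigma]$ obtained by first substituting $x_1,\ldots,x_{n-1}$; crucially, since $(a_1,\ldots,a_{n-1},a_n) \in D^{n,\sigma}$ for every $a_n \in A_n$, this partial substitution interacts correctly with a subsequent $x_n \mapsto a_n$. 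The inductive hypothesis applied to $p_{k_n}$ (with the sets $A_1,\ldots,A_{n-1}$, which satisfy $A_1\times\cdots\times A_{n-1} \subseteq D^{n-1,\sigma}$ — this needs a small check, but it follows since the $\sigma$-affine condition for pairs among the first $n-1$ coordinates is inherited) produces ${\bf a}'$ with $p_{k_n}({\bf a}') \neq 0$. Then $q(x_n) := p(a_1,\ldots,a_{n-1},x_n) \in D[x_n;\sigma]$ has $x_n$-degree exactly $k_n$ with nonzero leading coefficient $p_{k_n}({\bf a}')$ — here I must be careful that substitution does not lower the degree, which holds because the leading coefficient survives. Applying the $n=1$ case to $q$ and $A_n$ (using $\rk_\sigma(A_n) > k_n$) gives $a_n \in A_n$ with $q(a_n) \neq 0$, i.e. $p(a_1,\ldots,a_n) \neq 0$, completing the induction.

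The main obstacle I anticipate is the bookkeeping around \emph{partial substitution in the noncommutative setting}: one must verify (i) that substituting $x_1,\ldots,x_{n-1} \mapsto a_1,\ldots,a_{n-1}$ really does produce a genuine element of $D[x_n;\sigma]$ with the expected degree behavior, and (ii) that this partial substitution followed by $x_n \mapsto a_n$ agrees with the full $\sigma$-evaluation $p({\bf a})$ — this is exactly what Remark \ref{substitution} asserts, but one should confirm it applies at each intermediate point of the grid (which it does, since every relevant tuple lies in $D^{n,\sigma}$ by hypothesis on $A_1\times\cdots\times A_n$). A secondary subtlety is that "the coefficient of $x_1^{k_1}\cdots x_n^{k_n}$ is nonzero" must be propagated correctly: when we collect $p$ by powers of $x_n$, the $x_n^{k_n}$-part, written with its coefficients on the \emph{left} of $x_n^{k_n}$, is a well-defined polynomial in $x_1,\ldots,x_{n-1}$, and its $x_1^{k_1}\cdots x_{n-1}^{k_{n-1}}$-coefficient is (up to an application of a power of $\sigma$, which is injective) the original top coefficient — so nonvanishing is preserved. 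Once these compatibility points are nailed down, the argument is a clean two-line induction and no heavy computation is required; all the analytic content has been front-loaded into Lemmas \ref{lem:annulfcn}, \ref{lem:prodformskewmult}, \ref{union} and the $\lcm$-theory of \S\ref{sec:prelim}.
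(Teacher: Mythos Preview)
Your plan has a genuine gap at the partial-substitution step. Remark~\ref{substitution} does \emph{not} give a map $D[x_1,\ldots,x_n;\sigma]\to D[x_n;\sigma]$ by plugging in $(a_1,\ldots,a_{n-1})$; it factors the full evaluation by substituting $x_n\mapsto a_n$ \emph{first} (landing in $D[x_1,\ldots,x_{n-1};\sigma]$) and then substituting $(a_1,\ldots,a_{n-1})$ --- the opposite order from what you need. If one defines $q(x_n):=\sum_j p_j(a_1,\ldots,a_{n-1})\,x_n^{j}$ naively, the identity $q(a_n)=p(a_1,\ldots,a_n)$ is \emph{false} even on $D^{n,\sigma}$: for $n=2$ and $p=x_1x_2$ one has $p_1=x_1$, so $q(x_2)=a_1x_2$ and $q(a_2)=a_1a_2$, whereas \eqref{eq:defeval} gives $p(a_1,a_2)=\sigma(a_2)\,a_1=\sigma(a_1)\,a_2$, which differs from $a_1a_2$ unless $a_1$ is $\sigma$-fixed. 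There is also a second, independent problem: your expansion $\sum_{j=0}^{k_n}p_j x_n^{j}$ presumes $\deg_{x_n}p\le k_n$, but the hypotheses allow, for instance, a pure monomial $x_n^{k_1+\cdots+k_n}$ to appear in $p$; so even in the commutative case $q$ may have $x_n$-degree larger than $k_n$ and your $n=1$ base case does not apply without a preliminary reduction modulo the $f_{A_i,\sigma}$.

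The paper sidesteps both issues by inducting on $\deg p$ rather than on $n$. One right-divides $p$ by $x_1-a_1$, writes $p=q\cdot(x_1-a_1)+r$, and uses the product formula (Lemma~\ref{lem:prodformskewmult}) together with the $\sigma$-affine relations to show that $q$ vanishes on a \emph{$\sigma$-conjugated} grid $B_1\times A_2^{\sigma}\times\cdots\times A_n^{\sigma}$, where $B_1=\{b_1^{\,b_1-a_1}\mid b_1\in A_1\setminus\{a_1\}\}$; Lemma~\ref{divides} then gives $\rk_\sigma(B_1)>k_1-1$, closing the induction. The appearance of $\sigma$-conjugates and of the twisted sets $A_i^{\sigma}$ is precisely what compensates for the failure of naive partial substitution in the skew setting, and this mechanism has no counterpart in your outline.
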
 
\begin{proof}
We prove the theorem by induction on $\deg(p)$. If $\deg(p) = 0$, then $p$ is a non-zero constant in $D$, and the assertion holds trivially. 

Now suppose that $\deg(p) > 0$ and that we have proven the theorem for all polynomials of degree smaller than $\deg(p)$. Assume to the contrary that $p$ vanishes on $A_1 \times \dots \times A_n$. By relabeling the variables, we may assume without loss of generality that $k_1 > 0$. Choose $a_1 \in A_1$ and apply right-hand division with remainder to write $p = q\cdot (x_1-a_1)+r$ with $r \in D[x_2,\ldots,x_n;\sigma][x_1;\sigma]$ of degree smaller than $1$ in $x_1$, that is $r \in D[x_2,\ldots,x_n;\sigma]$. Since in $p$ there appears a monomial of the form $\lambda \cdot x_1^{k_1}\cdots x_n^{k_n}$, it follows that in $q$ there appears a monomial of the form $\lambda \cdot x_1^{k_1-1}\cdots x_n^{k_n}$, and clearly $\deg(q) = \deg(p)-1$.  

Since $A_1 \times \dots \times A_n \subseteq D^{n,\sigma}$, given a point ${\bf a} \in \{a_1\} \times A_2 \times \dots \times A_n$, we may substitute it into the equation $p = q \cdot (x_1-a_1)+r$ and get that $ r({\bf a}) = p({\bf a}) = 0$. Since $r \in D[x_2,\ldots,x_n;\sigma]$, this implies that $r$ vanishes on the set $A_2 \times \dots \times A_n$. In particular, for any point ${\bf b} \in (A_1 \setminus\{a_1\}) \times A_2 \times \dots \times A_n$, when viewing $r$ as a polynomial in $D[x_1,\ldots,x_n;\sigma]$, we have $r({\bf b}) = 0$, and thus 
\begin{equation}\label{eq:qxavan}
\big( q(x_1-a_1) \big) ({\bf b}) = p({\bf b}) - r({\bf b}) = 0.
\end{equation}

Fix ${\bf b}=(b_1,a_2,\dots,a_n) \in (A_1 \setminus\{a_1\}) \times A_2 \times \dots \times A_n$. Consider the substitution map from $D[x_1,\ldots,x_n;\sigma] = D[x_2,\ldots,x_n;\sigma][x_1;\sigma]$ to $D[x_2,\ldots,x_n;\sigma]$
given by $h(x_1,x_2,\ldots,x_n) \mapsto h(b_1,x_2,\ldots,x_n)$. By Remark \ref{substitution}, applying this substitution to $q\cdot(x_1-a_1)$ gives $$q(b_1^{b_1-a_1},x_2,\ldots,x_n)\cdot (b_1-a_1) \in D[x_2,\ldots,x_n;\sigma].$$

Next, applying the substitution $x_2\mapsto a_2$ to this polynomial, we get the polynomial $$q(b_1^{b_1-a_1},a_2^{b_1-a_1},x_3,\ldots,x_n)\cdot (b_1-a_1) \in D[x_3,\ldots,x_n;\sigma]. \footnote{Here we have used Remark \ref{substitution} in the special case where $g$ is the non-zero constant $b_1-a_1$.}$$ Note that by our assumptions, for $2 \leq i \leq n$, we have $a_i^{b_1-a_1} = a_i^\sigma$; Indeed:
\begin{align}
a_i^{b_1-a_1} &= (b_1^\sigma - a_1^\sigma) a_i (b_1-a_1)^{-1} = (b_1^\sigma a_i -a_1^\sigma a_i) (b_1-a_1)^{-1} \nonumber  \\
&= (a_i^\sigma b_1 - a_i^\sigma a_1)(b_1-a_1)^{-1} = a_i^\sigma (b_1 -  a_1)(b_1-a_1)^{-1} = a_i^\sigma.\label{eq:comaibmoina}
\end{align}
Continuing in similar fashion to substitute all of the variables up to $x_n \mapsto a_n$, we get that $$0 = \big( q(x_1-a_1)\big)({\bf b})=\big(q(x_1-a_1)\big)(b_1,a_2,\ldots,a_n) = q(b_1^{b_1-a_1},a_2^\sigma,\ldots,a_n^\sigma) \cdot (b_1-a_1).$$ 
Note that $(b_1^{b_1-a_1},a^\sigma_2,\ldots,a^\sigma_n)$ is indeed a point is $D^{n,\sigma}$: For $2 \leq i  \leq n$, we have 

\begin{align*}
(b_1^{b_1-a_1})^\sigma a_i^\sigma &= (b_1^{b_1-a_1})^\sigma a_i^{b_1-a_1}= (b_1-a_1)^{\sigma^2}b_1^\sigma ((b_1-a_1)^{-1})^\sigma (b_1-a_1)^\sigma a_i (b_1-a_1)^{-1}\\
&= (b_1-a_1)^{\sigma^2}b_1^\sigma a_i (b_1-a_1)^{-1} = (b_1-a_1)^{\sigma^2}a_i^\sigma b_1 (b_1-a_1)^{-1}\\
&=(b_1-a_1)^{\sigma^2}a_i^\sigma ((b_1-a_1)^\sigma)^{-1}(b_1-a_1)^\sigma b_1 (b_1-a_1)^{-1}=(a_i^{b_1-a_1})^\sigma b_1^{b_1-a_1} \\
&= (a_i^\sigma)^\sigma b_1^{b_1-a_1}\quad \textrm{by \eqref{eq:comaibmoina}},
\end{align*}
and $$(a_i^\sigma)^\sigma a_j^\sigma =  (a_i^\sigma a_j)^\sigma = (a_j^\sigma a_i)^\sigma = (a_j^\sigma)^\sigma a_i^\sigma$$ for $1 <i < j \leq n$ since ${\bf b}=(b_1,a_2,\ldots,a_n) \in A_1 \times \cdots \times A_n \subseteq D^{n,\sigma}$.


 Set $B_1 = \{b_1^{b_1-a_1} \mid b_1 \in A_1 \setminus \{a_1\} \}$. We have thus shown that $q$ vanishes on the set $B_1 \times A^\sigma_2 \times \cdots \times A^\sigma_n \subseteq D^{n,\sigma}$. Note that for each $2 \leq i \leq n$, the set $A_i^\sigma$ is $\sigma$-algebraic with $\rk_\sigma(A_i^\sigma) = \rk_\sigma(A_i)$. Indeed, if $f_i$ is the $\sigma$-minimal polynomial of $A_i$ then $f_i^\sigma$ is the minimal polynomial of $A_i^\sigma$. Now, consider the polynomial

$$\big(\lcm\{x_1-b_1^{b_1-a_1} \mid b_1 \in A_1\setminus\{a_1\}\}\big)\cdot(x_1-a_1) = \big(\lcm\{x_1-c_1 \mid c_1 \in B_1\}\big)\cdot(x_1-a_1).$$
By Lemma \ref{divides}, this polynomial is right-hand divisible in $D[x_1;\sigma]$ by $\lcm\{x_1-b_1 \mid b_1 \in A_1\}$. By our assumptions, the degree (which is $\rk_\sigma(A_1)$) of the latter polynomial is larger than $k_1$, hence 
$$\deg\big(\lcm\{x_1-c_1 \mid c_1 \in B_1\}\big) + 1 > k_1,
$$ so $\rk_\sigma(B_1) = \deg\big(\lcm\{x_1-c_1 \st c_1 \in B_1\}\big) > k_1-1$. Since $\deg(q) =\deg(p)-1 < \deg(p)$, the polynomial $q$ vanishes on $B_1 \times A_2^\sigma \times \dots \times A_n^\sigma$, and in $q$ there appears the monomial $\lambda x_1^{k_1-1}\cdot x_2^{k_2} \cdots x_n^{k_n}$, we get a contradiction with the induction hypothesis. 

Consequently, there is a point in $A_1\times \dots \times A_n$ at which $p$ does not vanish. This completes the inductive proof of the theorem. \end{proof}

\section{Skew Chevalley--Warning theorem}\label{sec:cheval}

In this section, we establish the {\bf skew Chevalley--Warning theorem}. Let $F=\mathbb{F}_q$ be a finite field with $q=p^m$ ($m \geq 1$) a prime power, and let $\sigma$ be an automorphism of $F$. Note that $\sigma={\rm Frob}^k$, for some $0 \leq k \leq m-1$, where ${\rm Frob}$ denotes the Frobenius automorphism of $F$.  Denote the fixed subfield of $F$ under $\sigma$ by $K$, that is, $K=F^\sigma$. Let $\theta=\gcd(k,m)$. Since the order of $\sigma$ is $o(\sigma)=\frac{m}{\theta}$, it follows that $|K|=q^{\frac{1}{o(\sigma)}}=p^{\theta}$. Now, consider the set 
$$\mathscr{W}_F=\{\sigma(a)a^{-1} \mid a \in F^*\}=\{a^{p^k-1}\mid a \in F^*\}.$$ 
For each $\lambda \in \mathscr{W}_F$, choose an element $\omega_\lambda \in F$ such that $\omega_\lambda^{p^k-1}=\lambda$, and let 
$$\mathfrak{S}_\lambda=\{a \in F \mid \sigma(a)=\lambda a\}=\{a \in F \mid a^{p^k-1}=\lambda\} \cup \{0\}=\omega_\lambda K.$$
{ In the following, we assume that the evaluation of any polynomial $g \in F[x_1,\dots,x_n;{\rm id}]$ is always performed in the classical sense.}
\begin{lem}\label{lem:carfcn}
Let $n \geq 1$. Then:
\begin{enumerate}
\item $|\mathscr{W}_F|=\frac{q-1}{q^{\frac{1}{o(\sigma)}}-1}$; \label{eq:cardWf}
\item $F^{n,\sigma}=\bigcup_{\lambda \in \mathscr{W}_F}\mathfrak{S}_\lambda ^n =\cup_{\lambda \in \mathscr{W}_F}\omega_\lambda K^n$;\label{eq:disunfcn}
\item and $|F^{n,\sigma}|=\frac{(q-1)\left(q^{\frac{n}{o(\sigma)}}-1\right)+q^{\frac{1}{o(\sigma)}}-1}{q^{\frac{1}{o(\sigma)}}-1}$. In particular $|F^{n,\sigma}| \equiv 0\pmod{p}$.
\end{enumerate}
\end{lem}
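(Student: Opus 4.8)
The plan is to prove the three statements in order, as each one feeds into the next. For part~(1), observe that $\mathscr{W}_F$ is the image of the group homomorphism $F^* \to F^*$, $a \mapsto a^{p^k-1} = \sigma(a)a^{-1}$. Its kernel is exactly $\{a \in F^* \mid \sigma(a) = a\} = K^*$, so $|\mathscr{W}_F| = |F^*|/|K^*| = (q-1)/(p^\theta - 1) = (q-1)/(q^{1/o(\sigma)}-1)$, using the already-established identity $|K| = q^{1/o(\sigma)}$. (One should note $\mathscr{W}_F$ is a subgroup of $F^*$, being the image of a homomorphism, which will be convenient but is not strictly needed.)

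For part~(2), the containment $\bigcup_\lambda \mathfrak{S}_\lambda^n \subseteq F^{n,\sigma}$ is the easy direction: if ${\bf a} \in \mathfrak{S}_\lambda^n$, then $\sigma(a_i) = \lambda a_i$ for every $i$, so $\sigma(a_j)a_i = \lambda a_j a_i = \lambda a_i a_j = \sigma(a_i)a_j$ by commutativity of $F$, hence ${\bf a} \in F^{n,\sigma}$. For the reverse containment, take ${\bf a} = (a_1,\dots,a_n) \in F^{n,\sigma}$. If ${\bf a} = {\bf 0}$ it lies in every $\mathfrak{S}_\lambda^n$ and we are done; otherwise pick an index $i_0$ with $a_{i_0} \neq 0$ and set $\lambda = \sigma(a_{i_0})a_{i_0}^{-1} \in \mathscr{W}_F$. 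For any other index $j$: if $a_j = 0$ then $a_j \in \mathfrak{S}_\lambda$ trivially; if $a_j \neq 0$, the defining relation $\sigma(a_j)a_{i_0} = \sigma(a_{i_0})a_j$ gives $\sigma(a_j) = \sigma(a_{i_0})a_{i_0}^{-1} a_j = \lambda a_j$ (again using commutativity to move $a_{i_0}^{-1}$ past $a_j$), so $a_j \in \mathfrak{S}_\lambda$. Hence ${\bf a} \in \mathfrak{S}_\lambda^n$. The identification $\mathfrak{S}_\lambda^n = \omega_\lambda K^n$ is immediate from $\mathfrak{S}_\lambda = \omega_\lambda K$.

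For part~(3), I would apply inclusion--exclusion — or more cleanly, count the union directly. Each set $\mathfrak{S}_\lambda^n = \omega_\lambda K^n$ has exactly $|K|^n = q^{n/o(\sigma)}$ elements. Two distinct sets $\mathfrak{S}_\lambda^n$ and $\mathfrak{S}_\mu^n$ (with $\lambda \neq \mu$) intersect only in ${\bf 0}$: if a nonzero ${\bf a}$ lay in both, then for an index $i$ with $a_i \neq 0$ we would get $\lambda = \sigma(a_i)a_i^{-1} = \mu$, a contradiction. So the $|\mathscr{W}_F|$ sets share exactly the single common point ${\bf 0}$, giving
\begin{equation*}
|F^{n,\sigma}| = |\mathscr{W}_F|\big(q^{n/o(\sigma)} - 1\big) + 1 = \frac{(q-1)\big(q^{n/o(\sigma)}-1\big)}{q^{1/o(\sigma)}-1} + 1 = \frac{(q-1)\big(q^{n/o(\sigma)}-1\big) + q^{1/o(\sigma)}-1}{q^{1/o(\sigma)}-1},
\end{equation*}
which is the claimed formula. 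Finally, for the divisibility $|F^{n,\sigma}| \equiv 0 \pmod p$: the value equals $|\mathscr{W}_F|\big(q^{n/o(\sigma)}-1\big) + 1$, and since $q^{n/o(\sigma)} = p^{n\theta}$ is a power of $p$ we have $q^{n/o(\sigma)} - 1 \equiv -1 \pmod p$, so $|F^{n,\sigma}| \equiv -|\mathscr{W}_F| + 1 = 1 - |\mathscr{W}_F| \pmod p$. It remains to see $|\mathscr{W}_F| \equiv 1 \pmod p$: indeed $|\mathscr{W}_F| = (q-1)/(p^\theta-1) = (p^m-1)/(p^\theta-1) = 1 + p^\theta + p^{2\theta} + \cdots + p^{(o(\sigma)-1)\theta} \equiv 1 \pmod p$, since every term after the first is divisible by $p$. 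Hence $|F^{n,\sigma}| \equiv 1 - 1 = 0 \pmod p$.

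The only genuinely delicate point is the reverse inclusion in part~(2): one must be careful that a single global $\lambda$ works for all coordinates simultaneously, which is exactly where the pairwise relations $\sigma(a_j)a_i = \sigma(a_i)a_j$ (linking every nonzero coordinate to the chosen pivot $a_{i_0}$) are used; everything else is bookkeeping with finite-field arithmetic. The divisibility at the end is then a short geometric-series congruence, which is why the more suggestive form $|\mathscr{W}_F|(q^{n/o(\sigma)}-1)+1$ of the count is worth isolating before reducing mod $p$.
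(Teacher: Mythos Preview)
Your proof is correct and follows essentially the same route as the paper. The paper dispatches part~(2) as ``straightforward'' and proves part~(3) via the identical disjoint-union count $F^{n,\sigma}=\bigsqcup_{\lambda}(\omega_\lambda K^n\setminus\{\mathbf{0}\})\sqcup\{\mathbf{0}\}$; your argument simply supplies the details. For part~(1) there is a minor cosmetic difference: the paper phrases the count via the identity $\gcd(p^k-1,p^m-1)=p^{\theta}-1$ and identifies $\mathscr{W}_F$ as the kernel of the $\tfrac{q-1}{p^\theta-1}$-th power map, whereas you use the first isomorphism theorem on $a\mapsto\sigma(a)a^{-1}$ with kernel $K^*$; these are equivalent, and your version is arguably more direct. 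The paper does not spell out the final congruence $|F^{n,\sigma}|\equiv 0\pmod p$, so your geometric-series reduction is a welcome addition.
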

\begin{proof}
The first statement follows from the identity $\gcd(p^k-1,p^m-1)=p^{\gcd(k,m)}-1=q^{\frac{1}{o(\sigma)}}-1$, and from the fact that $\mathscr{W}_F$ is the kernel of the map given by raising to the power of $\frac{q-1}{q^{\frac{1}{o(\sigma)}}-1}$. The second statement is straightforward. For the third statement: By \eqref{eq:disunfcn}, we have 
$$F^{n,\sigma}=\bigcup_{\lambda \in \mathscr{W}_F}(\mathfrak{S}_\lambda )^n=\bigsqcup_{\lambda \in \mathscr{W}_F}\left(\omega_\lambda K^n \setminus \{{\bf 0}\}\right) \bigsqcup \{{\bf 0}\}.$$ Thus 
\begin{align*}
|F^{n,\sigma}|&=\sum_{\lambda \in \mathscr{W}_F}|\omega_\lambda K^n\setminus \{{\bf 0}\}|+1\\
       &=\sum_{\lambda \in \mathscr{W}_F}\left(q^{\frac{n}{o(\sigma)}}-1\right)+1\\
       &=
\frac{q-1}{q^{\frac{1}{o(\sigma)}}-1} \left(q^{\frac{n}{o(\sigma)}}-1\right)+1 \quad \quad \quad \quad \textrm{by \eqref{eq:cardWf}}\\
&=\frac{(q-1)\left(q^{\frac{n}{o(\sigma)}}-1\right)+q^{\frac{1}{o(\sigma)}}-1}{q^{\frac{1}{o(\sigma)}}-1}.
\end{align*}
\end{proof}
\noindent
For $f=\sum_{(k_1,\dots,k_n)} a_{k_1,\dots,k_n}x_1^{k_1}\cdots x_n^{k_n} \in F[x_1,\dots,x_n;\sigma]$ and $\lambda \in \mathscr{W}_F$, let $f^\lambda$ be the polynomial in $F[x_1,\dots,x_n;{\rm id}]$ defined by:
\begin{align*}
f^{\lambda}(x_1,\dots,x_n)&=\sum_{(k_1,\dots,k_n)}a_{k_1,\dots,k_n}\left[x_1^{k_1}\cdots  x_n^{k_n}\right]\overbrace{\left(\omega_\lambda,\dots,\omega_\lambda\right)}^{n\,\,\textrm{times}}x_1^{k_1}\cdots  x_n^{k_n}\\
&=\sum_{(k_1,\dots,k_n)}a_{k_1,\dots,k_n}\prod_{i=1}^n \sigma^{\sum_{j=1}^{i-1}k_j}({\rm N}^\sigma_{k_i}(\omega_\lambda))x_1^{k_1}\cdots  x_n^{k_n}.
\end{align*}
\begin{lem}\label{lem:resttoflam}
Let $f \in F[x_1,\dots,x_n;\sigma]$ and let $\lambda \in \mathscr{W}_F$. Then 
$$
f(\omega_\lambda {\bf a})=f^{\lambda}({\bf a}),
$$ for all ${\bf a} \in K^n$.
\end{lem}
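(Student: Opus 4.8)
The plan is to reduce the claim to the monomial case and then recognize that the $\sigma$-evaluation of a monomial at $\omega_\lambda \mathbf{a}$ factors as a product of a "constant" depending only on $\omega_\lambda$ and the ordinary (commutative) evaluation at $\mathbf{a}$. By linearity of both $f \mapsto f(\mathbf{b})$ on $D^{n,\sigma}$ and $f \mapsto f^\lambda$, it suffices to prove $\left[x_1^{k_1}\cdots x_n^{k_n}\right](\omega_\lambda \mathbf{a}) = \left[x_1^{k_1}\cdots x_n^{k_n}\right](\omega_\lambda,\ldots,\omega_\lambda)\cdot a_1^{k_1}\cdots a_n^{k_n}$ for every exponent vector, where the product on the right is in the commutative sense. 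Note first that $\omega_\lambda\mathbf{a} = (\omega_\lambda a_1,\ldots,\omega_\lambda a_n)$ lies in $F^{n,\sigma}$: since $\mathbf{a}\in K^n$ and $K = F^\sigma$, we have $\sigma(\omega_\lambda a_j)\,\omega_\lambda a_i = \sigma(\omega_\lambda)a_j\,\omega_\lambda a_i = \lambda\omega_\lambda a_j \omega_\lambda a_i$, which is symmetric in $i,j$ because $F$ is commutative; so the $\sigma$-evaluation is well-defined there. (Alternatively this is part of Lemma \ref{lem:carfcn}(\ref{eq:disunfcn}).)

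Next I would compute the $k$-norm ${\rm N}^\sigma_k(\omega_\lambda a)$ for $a \in K$. Since $\sigma$ fixes $K$ pointwise, $\sigma^i(\omega_\lambda a) = \sigma^i(\omega_\lambda)\,a$, so ${\rm N}^\sigma_k(\omega_\lambda a) = \prod_{i=0}^{k-1}\sigma^i(\omega_\lambda a) = \left(\prod_{i=0}^{k-1}\sigma^i(\omega_\lambda)\right) a^k = {\rm N}^\sigma_k(\omega_\lambda)\,a^k$, using commutativity of $F$ to pull all the $a$'s together. Plugging this into the defining formula \eqref{eq:defeval} for the $\sigma$-evaluation of $x_1^{k_1}\cdots x_n^{k_n}$ at $\omega_\lambda\mathbf{a}$, each factor $\sigma^{\sum_{j=1}^{i-1}k_j}\!\left({\rm N}^\sigma_{k_i}(\omega_\lambda a_i)\right)$ becomes $\sigma^{\sum_{j=1}^{i-1}k_j}\!\left({\rm N}^\sigma_{k_i}(\omega_\lambda)\right)\cdot a_i^{k_i}$, because $\sigma$ fixes $a_i^{k_i}\in K$. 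Since $F$ is commutative the $a_i^{k_i}$ all commute out of the product and collect to $a_1^{k_1}\cdots a_n^{k_n}$, while the remaining product is exactly $\left[x_1^{k_1}\cdots x_n^{k_n}\right](\omega_\lambda,\ldots,\omega_\lambda)$. This is precisely the coefficient appearing in the definition of $f^\lambda$, so summing over all monomials with the coefficients $a_{k_1,\ldots,k_n}$ gives $f(\omega_\lambda\mathbf{a}) = f^\lambda(\mathbf{a})$.

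There is no real obstacle here — the only thing to be careful about is bookkeeping with the twisted products and the nested powers of $\sigma$ in \eqref{eq:defeval}, and the key simplification (that every quantity coming from $\mathbf{a}$ lands in the $\sigma$-fixed field $K$ and hence is both $\sigma$-invariant and central) makes all the noncommutative subtleties collapse. The one point worth stating explicitly is that we are using that $\mathbf{a}\in K^n$ in two ways: to know $\omega_\lambda\mathbf{a}\in F^{n,\sigma}$ so the left-hand side makes sense, and to commute and de-twist the $a_i$'s on the right.
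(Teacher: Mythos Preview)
Your proof is correct and follows essentially the same approach as the paper: reduce by linearity to a single monomial, use that $a_i\in K=F^\sigma$ to factor ${\rm N}^\sigma_{k_i}(\omega_\lambda a_i)={\rm N}^\sigma_{k_i}(\omega_\lambda)\,a_i^{k_i}$, and then collect the $a_i^{k_i}$ (which are $\sigma$-fixed) outside the product to recover the definition of $f^\lambda(\mathbf{a})$. Your explicit verification that $\omega_\lambda\mathbf{a}\in F^{n,\sigma}$ is a nice addition that the paper leaves implicit via Lemma~\ref{lem:carfcn}(\ref{eq:disunfcn}).
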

\begin{proof}
By linearity, we may assume that $f=x_1^{k_1}\cdots x_n^{k_n}$ with $k_1,\dots,k_n \geq 0$. For ${\bf a} \in K^n$, we have
\begin{align*}
f(\omega_\lambda {\bf a})&=\prod_{i=1}^n \sigma^{\sum_{j=1}^{i-1}k_j}({\rm N}^\sigma_{k_i}(\omega_\lambda a_i))\\
&=\prod_{i=1}^n \sigma^{\sum_{j=1}^{i-1}k_j}({\rm N}^\sigma_{k_i}(\omega_\lambda))\prod_{i=1}^n \sigma^{\sum_{j=1}^{i-1}k_j}({\rm N}^\sigma_{k_i}(a_i))\quad\quad\textrm{since}\,\, \sigma\,\,\textrm{is a morphism}\\
&=\left(\prod_{i=1}^n \sigma^{\sum_{j=1}^{i-1}k_j}({\rm N}^\sigma_{k_i}(\omega_\lambda))\right)  a_1^{k_1}\cdots a_n^{k_n}\quad\quad\textrm{since}\,\, {\bf a} \in K^n\\
&=f^\lambda({\bf a}).
\end{align*}
\end{proof}

The following Lemma is a variant of the so-called Ax's Lemma \cite[Lemma 1.2]{CT24}.

\begin{lem}\label{lem:arev}
Let $g \in F[x_1,\dots,x_n;{\rm id}]$ be such that $\deg(g) < n\left(q^{\frac{1}{o(\sigma)}}-1\right)=n(|K|-1)$. Then 
$$
\sum_{{\bf a} \in K^n}g({\bf a})=0.
$$
\end{lem}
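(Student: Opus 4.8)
The plan is to prove Lemma~\ref{lem:arev} by the classical character-sum (or induction-on-degree) argument for Ax's Lemma, applied here to the finite field $K = \mathbb{F}_{q^{1/o(\sigma)}}$ rather than to $F$ itself. Since $g \in F[x_1,\ldots,x_n;\mathrm{id}]$ but we only sum over $\mathbf{a} \in K^n$, I would first reduce to monomials: by additivity of $\mathbf{a} \mapsto g(\mathbf{a})$ and of the sum, it suffices to show $\sum_{\mathbf{a} \in K^n} c\, a_1^{d_1}\cdots a_n^{d_n} = 0$ whenever $d_1 + \cdots + d_n < n(|K|-1)$, for any coefficient $c \in F$. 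The sum factors as $c \prod_{i=1}^n \big(\sum_{a \in K} a^{d_i}\big)$, so everything comes down to the one-variable fact that $\sum_{a \in K} a^{d} = 0$ whenever $0 \le d < |K|-1$ (and also when $d = 0$, giving $|K| \equiv 0$, but that case is excluded here since $d_1+\cdots+d_n < n(|K|-1)$ forces at least one $d_i < |K|-1$; more carefully, if some $d_i = 0$ we still need the corresponding factor, so I should note that actually the hypothesis guarantees \emph{some} index $i$ with $d_i \le |K|-2$, and for that single factor $\sum_{a\in K} a^{d_i} = 0$, killing the whole product).

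The key sub-step is therefore: for a finite field $K$ and an integer $d \ge 0$ with $d \le |K| - 2$, one has $\sum_{a \in K} a^d = 0$. I would prove this in the standard way: if $d = 0$ the sum is $|K| \cdot 1 = 0$ in characteristic $p$ only when $p \mid |K|$, which is false, so in fact for $d=0$ the sum is $|K| = 0$ in $K$ — wait, that is $0$ in $K$ since $\mathrm{char}(K) = p$ and $|K|$ is a power of $p$. Good, so $d = 0$ also gives $0$. For $1 \le d \le |K|-2$, pick a generator $t$ of the cyclic group $K^*$; then $t^d \ne 1$, and $\sum_{a \in K} a^d = \sum_{a \in K^*} a^d = \sum_{j=0}^{|K|-2} t^{jd} = \frac{t^{d(|K|-1)} - 1}{t^d - 1} = 0$ since $t^{|K|-1} = 1$. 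This handles every exponent appearing, and so each monomial sum vanishes, hence $\sum_{\mathbf{a}\in K^n} g(\mathbf{a}) = 0$.

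Putting it together: write $g = \sum_{(d_1,\ldots,d_n)} c_{d_1,\ldots,d_n} x_1^{d_1}\cdots x_n^{d_n}$ with only finitely many nonzero $c_{d_1,\ldots,d_n}$, and note each monomial that actually occurs satisfies $d_1 + \cdots + d_n \le \deg(g) < n(|K|-1)$, hence $d_i \le |K|-2$ for at least one index $i$. Then
\begin{equation*}
\sum_{\mathbf{a} \in K^n} g(\mathbf{a}) = \sum_{(d_1,\ldots,d_n)} c_{d_1,\ldots,d_n} \prod_{i=1}^n \Big(\sum_{a \in K} a^{d_i}\Big),
\end{equation*}
and each product vanishes because one of its factors does. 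Therefore the total sum is $0$. I do not foresee a genuine obstacle here — the only point requiring a little care is bookkeeping the hypothesis $\deg(g) < n(|K|-1)$ correctly so as to guarantee a vanishing factor in \emph{every} monomial (rather than, say, needing all $d_i$ small), and remembering that the coefficients lie in $F \supseteq K$ but the evaluation points lie in $K$, so the factorization of the monomial sum is still valid since multiplication in $F$ is commutative and $K$-bilinear. The identification $|K| = q^{1/o(\sigma)}$ from the preceding discussion then matches the statement as written.
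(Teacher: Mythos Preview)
Your proposal is correct and follows essentially the same approach as the paper's proof: reduce by linearity to a single monomial $x_1^{k_1}\cdots x_n^{k_n}$, factor the sum over $K^n$ as $\prod_i \sum_{a\in K} a^{k_i}$, use the degree hypothesis via pigeonhole to find an index $i_0$ with $k_{i_0}\le |K|-2$, and conclude that this factor vanishes. The paper simply asserts $\sum_{a\in K}a^{k_{i_0}}=0$ without further comment, whereas you spell out the cyclic-group computation and the $d=0$ case; otherwise the arguments are identical.
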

\begin{proof}
By linearity, we may assume that $g=x_1^{k_1}\cdots x_n^{k_n}$. Since $\deg(g) < n\left(|K|-1\right)$, there exists $1 \leq i_0 \leq n$ such that $k_{i_0} < |K|-1$.  Hence
$$
\sum_{{\bf a} \in K^n}g({\bf a})=\sum_{{\bf a} \in K^n}a_1^{k_1}\cdots a_n^{k_n}=\left[\prod_{i\neq i_0}\left( \sum_{a \in K}a^{k_i} \right)\right] \cdot \underbrace{\left(\sum_{a \in K}a^{k_{i_0}}\right)}_{=0}=0.
$$
\end{proof}
In the classical setting, the Chevalley--Warning theorem can be proved using Ax's lemma. The following result is a skew analogue of Ax's lemma. However, we were unable to use it to directly prove the skew Chevalley--Warning theorem.
\begin{prop}[Skew Ax's lemma]\label{prop:axlemma}
Let $f \in F[x_1,\dots,x_n;\sigma]$ be such that $\deg(f)<n\left(q^{\frac{1}{o(\sigma)}}-1\right)$. Then
$$
\sum_{{\bf a} \in F^{n,\sigma}}f({\bf a})=0.
$$
\end{prop}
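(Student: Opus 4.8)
The plan is to use the (near-)partition of the $\sigma$-affine space supplied by Lemma \ref{lem:carfcn}, linearize the skew evaluation on each piece via Lemma \ref{lem:resttoflam}, and then annihilate each resulting ordinary sum with Lemma \ref{lem:arev}; the only thing requiring care is the over-counting of the origin, where the pieces overlap.

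First I would record the combinatorial set-up. By Lemma \ref{lem:carfcn}\eqref{eq:disunfcn} we have $F^{n,\sigma}=\bigcup_{\lambda\in\mathscr{W}_F}\omega_\lambda K^n$, and this union is disjoint away from the origin: if $\lambda\neq\mu$ then $\mathfrak{S}_\lambda\cap\mathfrak{S}_\mu=\{0\}$, since a nonzero $a$ with $\sigma(a)=\lambda a=\mu a$ forces $\lambda=\mu$, whence $\omega_\lambda K^n\cap\omega_\mu K^n=\{{\bf 0}\}$. Every ${\bf b}\in F^{n,\sigma}\setminus\{{\bf 0}\}$ therefore lies in exactly one slab $\omega_\lambda K^n$, while ${\bf 0}$ lies in all $|\mathscr{W}_F|$ of them, so
\[
\sum_{\lambda\in\mathscr{W}_F}\ \sum_{{\bf b}\in\omega_\lambda K^n} f({\bf b})=\sum_{{\bf b}\in F^{n,\sigma}} f({\bf b})+(|\mathscr{W}_F|-1)\,f({\bf 0}).
\]

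Next I would compute the inner sums. Fixing $\lambda$ and substituting ${\bf b}=\omega_\lambda{\bf a}$ with ${\bf a}$ ranging over $K^n$, Lemma \ref{lem:resttoflam} gives $\sum_{{\bf b}\in\omega_\lambda K^n}f({\bf b})=\sum_{{\bf a}\in K^n}f(\omega_\lambda{\bf a})=\sum_{{\bf a}\in K^n}f^\lambda({\bf a})$, where $f^\lambda\in F[x_1,\dots,x_n;{\rm id}]$ is supported on the same monomials as $f$ and hence has $\deg(f^\lambda)\le\deg(f)<n\bigl(q^{1/o(\sigma)}-1\bigr)=n(|K|-1)$. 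Lemma \ref{lem:arev} then yields $\sum_{{\bf a}\in K^n}f^\lambda({\bf a})=0$ for every $\lambda$, so the left-hand side of the displayed identity is $0$, and therefore $\sum_{{\bf b}\in F^{n,\sigma}}f({\bf b})=-(|\mathscr{W}_F|-1)\,f({\bf 0})$.

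Finally I would verify that the right-hand side vanishes in $F$. By Lemma \ref{lem:carfcn}\eqref{eq:cardWf}, with $|K|=p^\theta$ and $q=p^m$ where $\theta\mid m$, we get $|\mathscr{W}_F|-1=\frac{q-1}{|K|-1}-1=\frac{q-|K|}{|K|-1}=p^\theta\cdot\frac{p^{m-\theta}-1}{p^\theta-1}$, and the last fraction is an integer because $\theta\mid(m-\theta)$; hence $p\mid(|\mathscr{W}_F|-1)$, so $(|\mathscr{W}_F|-1)\,f({\bf 0})=0$ in the characteristic-$p$ field $F$, and $\sum_{{\bf a}\in F^{n,\sigma}}f({\bf a})=0$. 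I do not expect a genuine obstacle here; the one point to get right is the bookkeeping at ${\bf 0}$, where the slabs $\omega_\lambda K^n$ all meet, together with the elementary divisibility $p\mid |\mathscr{W}_F|-1$ — everything else is an immediate appeal to Lemmas \ref{lem:carfcn}, \ref{lem:resttoflam} and \ref{lem:arev}.
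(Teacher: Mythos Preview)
Your proof is correct and follows essentially the same route as the paper: decompose $F^{n,\sigma}$ via Lemma~\ref{lem:carfcn} into the slabs $\omega_\lambda K^n$, linearize with Lemma~\ref{lem:resttoflam}, kill each inner sum with Lemma~\ref{lem:arev}, and use $p\mid(|\mathscr{W}_F|-1)$ to dispose of the origin term. The only cosmetic difference is that the paper first reduces by linearity to a single monomial and writes the decomposition as the disjoint union $\bigsqcup_\lambda(\omega_\lambda K^n\setminus\{{\bf 0}\})\sqcup\{{\bf 0}\}$, whereas you keep $f$ general and correct for the over-count of ${\bf 0}$ directly; the content is identical.
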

\begin{proof}
By linearity, we may assume that $f=x_1^{k_1}\cdots x_n^{k_n}$ with $k_1,\dots,k_n \geq 0$. Then 
\begin{align*}
\sum_{{\bf a} \in F^{n,\sigma}}f({\bf a})&=\sum_{{\bf a} \in \bigsqcup_{\lambda \in \mathscr{W}_F}\left(\omega_\lambda K^n \setminus \{{\bf 0}\}\right) \bigsqcup \{{\bf 0}\}}f({\bf a})\\
&=f({\bf 0})+\sum_{\lambda \in \mathscr{W}_F}\sum_{{\bf a} \in K^n \setminus \{{\bf 0}\}}f(\omega_\lambda {\bf a })\\
&=f({\bf 0})+\sum_{\lambda \in \mathscr{W}_F}\sum_{{\bf a} \in K^n \setminus \{{\bf 0}\}}f^\lambda({\bf a })\quad\quad \textrm{by Lemma \ref{lem:resttoflam}}\\
&=f({\bf 0})+\sum_{\lambda \in \mathscr{W}_F}\sum_{{\bf a} \in K^n}f^\lambda({\bf a})-\sum_{\lambda \in \mathscr{W}_F}f^\lambda({\bf 0})\\
&=\sum_{\lambda \in \mathscr{W}_F}\sum_{{\bf a} \in K^n}f^\lambda({\bf a})+(\underbrace{-|\mathscr{W}_F|+1}_{\equiv 0 \pmod{p}})f({\bf 0})\,\,\,\,\,\,\,\,\,\,\,\textrm{by Lemma \ref{lem:resttoflam}}\\
&=0 \quad \quad\textrm{by Lemma \ref{lem:arev} since $\deg(f^\lambda)<n\left(q^{\frac{1}{o(\sigma)}}-1\right)$}.
\end{align*}
\end{proof}
\noindent
For $f_1,\dots,f_r \in F[x_1,\dots,x_n;\sigma]$, define 
$$
\mathscr{V}(f_1,\dots,f_r)=\left\lbrace{\bf a} \in F^{n,\sigma} \mid f_1({\bf a})=f_2({\bf a})=\dots=f_r({\bf a})=0\right\rbrace.
$$

\begin{thm}[Skew Chevalley--Warning theorem]\label{thm:skewchevwar}
Let $f_1,\dots,f_r \in F[x_1,\dots,x_n;\sigma]$ be polynomials such that $\deg(f_1)+\cdots+\deg(f_r)<n \frac{q^{\frac{1}{o(\sigma)}}-1}{q-1}$. Then $|\mathscr{V}(f_1,\dots,f_r)|\equiv 0 \pmod{p}$. 
\end{thm}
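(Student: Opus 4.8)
The plan is to mimic the classical deduction of Chevalley--Warning from Ax's Lemma, but to carry it out \emph{fibrewise} over the partition $F^{n,\sigma}=\bigsqcup_{\lambda\in\mathscr{W}_F}\bigl(\omega_\lambda K^n\setminus\{{\bf 0}\}\bigr)\sqcup\{{\bf 0}\}$ supplied by Lemma \ref{lem:carfcn}(2). The reason for working fibrewise is that the obvious skew indicator $\prod_{i=1}^r\bigl(1-f_i^{q-1}\bigr)$, formed inside $F[x_1,\dots,x_n;\sigma]$, does not transparently evaluate to the indicator function of $\mathscr{V}(f_1,\dots,f_r)$ — the conjugation twists in the product formula (Lemma \ref{lem:prodformskewmult}) get in the way — which is exactly the difficulty noted before Proposition \ref{prop:axlemma}. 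On each fibre $\omega_\lambda K^n$, however, the skew polynomials $f_i$ are governed by the \emph{commutative} polynomials $f_i^\lambda\in F[x_1,\dots,x_n;{\rm id}]$ via Lemma \ref{lem:resttoflam}, and there the classical argument runs using the commutative Ax's Lemma over the fixed field $K$ (Lemma \ref{lem:arev}).

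Concretely, for each $\lambda\in\mathscr{W}_F$ I would set $P^\lambda=\prod_{i=1}^r\bigl(1-(f_i^\lambda)^{q-1}\bigr)\in F[x_1,\dots,x_n;{\rm id}]$. Since $f_i^\lambda$ is supported on the same monomials as $f_i$, one has $\deg(f_i^\lambda)\le\deg(f_i)$, whence
$$\deg(P^\lambda)\le(q-1)\sum_{i=1}^r\deg(f_i)<(q-1)\cdot n\cdot\frac{q^{\frac{1}{o(\sigma)}}-1}{q-1}=n\bigl(q^{\frac{1}{o(\sigma)}}-1\bigr)=n(|K|-1),$$
so Lemma \ref{lem:arev} gives $\sum_{{\bf b}\in K^n}P^\lambda({\bf b})=0$ in $F$. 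For the indicator property I would use that for $c\in F$ one has $1-c^{q-1}=1$ if $c=0$ and $0$ otherwise; combining this with $f_i(\omega_\lambda{\bf b})=f_i^\lambda({\bf b})$ (Lemma \ref{lem:resttoflam}) shows that for every ${\bf b}\in K^n$ the value $P^\lambda({\bf b})$ equals $1_F$ when $\omega_\lambda{\bf b}\in\mathscr{V}(f_1,\dots,f_r)$ and $0$ otherwise.

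Writing $\mathscr{V}=\mathscr{V}(f_1,\dots,f_r)$ and $\varepsilon=1$ if ${\bf 0}\in\mathscr{V}$, $\varepsilon=0$ otherwise, I would then count $\mathscr{V}$ through the partition, using that ${\bf b}\mapsto\omega_\lambda{\bf b}$ is a bijection $K^n\setminus\{{\bf 0}\}\to\omega_\lambda K^n\setminus\{{\bf 0}\}$: in $F$,
$$|\mathscr{V}|\cdot 1_F=\varepsilon\cdot 1_F+\sum_{\lambda\in\mathscr{W}_F}\sum_{{\bf b}\in K^n\setminus\{{\bf 0}\}}P^\lambda({\bf b})=\varepsilon\cdot 1_F+\sum_{\lambda\in\mathscr{W}_F}\Bigl(\sum_{{\bf b}\in K^n}P^\lambda({\bf b})-P^\lambda({\bf 0})\Bigr)=\varepsilon\cdot 1_F-\sum_{\lambda\in\mathscr{W}_F}P^\lambda({\bf 0}),$$
the last step by the degree bound above. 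Since $P^\lambda({\bf 0})$ is the indicator of $\omega_\lambda{\bf 0}={\bf 0}\in\mathscr{V}$, it equals $\varepsilon\cdot 1_F$ for every $\lambda$, so the right-hand side is $(1-|\mathscr{W}_F|)\varepsilon\cdot 1_F$, which is $0$ in $F$ because $1-|\mathscr{W}_F|\equiv 0\pmod{p}$ — precisely the congruence already invoked in the proof of Proposition \ref{prop:axlemma}. Therefore $|\mathscr{V}|\cdot 1_F=0$ in $F$, i.e.\ $p\mid|\mathscr{V}(f_1,\dots,f_r)|$.

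I do not expect a genuine obstacle once the fibrewise reformulation is set up; the one point requiring care is the bookkeeping at the common point ${\bf 0}$ of the fibres $\omega_\lambda K^n$ — it is subtracted once for each $\lambda$ but must be counted only once in $F^{n,\sigma}$ — together with the observation that the resulting correction term is a multiple of $1-|\mathscr{W}_F|\equiv 0\pmod{p}$. The degree estimate, the indicator identity, and the transfer through Lemma \ref{lem:resttoflam} are all routine.
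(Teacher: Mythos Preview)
Your proof is correct and follows essentially the same route as the paper: the paper defines $P_f=\sum_{\lambda\in\mathscr{W}_F}P^\lambda$ as a single polynomial in $F[x_1,\dots,x_n;{\rm id}]$ and applies Lemma \ref{lem:arev} once to $\sum_{{\bf a}\in K^n}P_f({\bf a})$, whereas you apply Lemma \ref{lem:arev} to each $P^\lambda$ separately and then sum over $\lambda$, but the computation---the fibrewise indicator identity via Lemma \ref{lem:resttoflam}, the degree bound, and the correction $(|\mathscr{W}_F|-1)\varepsilon\equiv 0\pmod p$ at the origin---is identical.
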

\begin{proof}
Let $$P_f=\sum_{\lambda \in \mathscr{W}_F}\left(1- (f^\lambda_1)^{q-1} \right) \left(1- (f^\lambda_2)^{q-1} \right) \cdots \left(1- (f^\lambda_r)^{q-1} \right) \in F[x_1,\dots,x_n;{\rm id}].$$
Then
\begin{align*}
\sum_{{\bf a} \in K^n} P_f({\bf a})&=\sum_{{\bf a} \in K^n}\sum_{\lambda \in \mathscr{W}_F}\prod_{i=1}^r\left(1-\left(f^\lambda_i({\bf a})\right)^{q-1}\right)\\
&=\sum_{\lambda \in \mathscr{W}_F}\sum_{{\bf a}\in K^n}\prod_{i=1}^r\left(1-\left(f_i(\omega_\lambda {\bf a})\right)^{q-1}\right)\quad\quad\textrm{by Lemma \ref{lem:resttoflam}}.
\end{align*}
Since $F^{n,\sigma}\setminus\{0\}$ is the disjoint union of the sets $\omega_\lambda K^n\setminus\{0\}$, we have
\begin{align*}
\sum_{{\bf a} \in K^n} P_f({\bf a})&=\underbrace{\sum_{{\bf b} \in F^{n,\sigma}}\prod_{i=1}^r\left(1-\left(f_i({\bf b})\right)^{q-1}\right)}_{=|\mathscr{V}(f_1,\dots,f_r)|}-\prod_{i=1}^r\left(1-\left(f_i({\bf 0})\right)^{q-1}\right)+
\sum_{\lambda \in \mathscr{W}_F}\prod_{i=1}^r\left(1-\left(f_i({\bf 0})\right)^{q-1}\right)\\
&=|\mathscr{V}(f_1,\dots,f_r)|+(\underbrace{|\mathscr{W}_F|-1}_{\equiv 0 \pmod{p}})\prod_{i=1}^r\left(1-\left(f_i({\bf 0})\right)^{q-1}\right)\\
&=|\mathscr{V}(f_1,\dots,f_r)|
\end{align*}
Since $\deg(P_f)<n\left(q^{\frac{1}{o(\sigma)}}-1\right)$, by Lemma \ref{lem:arev}, we obtain
$$
|\mathscr{V}(f_1,\dots,f_r)|= \sum_{{\bf a} \in K^n}P_f({\bf a}) = 0.
$$
Thus $|\mathscr{V}(f_1,\dots,f_r)|\equiv 0 \pmod{p}$.
\end{proof}
\begin{cor}\label{cor:effnull}
Let $f_1,\dots,f_r \in F[x_1,\dots,x_n;\sigma]$ be homogeneous polynomials such that $\deg(f_1)+\dots+\deg(f_r)<n \frac{q^{\frac{1}{o(\sigma)}}-1}{q-1}$. Then $|\mathscr{V}(f_1,\dots,f_r)|\geq p$. 
\end{cor}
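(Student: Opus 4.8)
The plan is to deduce this from the skew Chevalley--Warning theorem (Theorem \ref{thm:skewchevwar}) exactly as in the classical case, the extra input being that a system of homogeneous equations always possesses the trivial common zero.

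First I would observe that the origin ${\bf 0}=(0,\dots,0)$ lies in the $\sigma$-affine space $F^{n,\sigma}$: the defining conditions $\sigma(a_j)a_i=\sigma(a_i)a_j$ of Definition \ref{def:affine} hold trivially when $a_1=\dots=a_n=0$. Next I would check that every homogeneous polynomial $f_i$ of positive degree vanishes at ${\bf 0}$. Indeed, by formula \eqref{eq:defeval} the $\sigma$-evaluation of a monomial $x_1^{k_1}\cdots x_n^{k_n}$ at ${\bf 0}$ is a product of terms of the shape $\sigma^{(\,\cdot\,)}({\rm N}^\sigma_{k_i}(0))$, and since ${\rm N}^\sigma_k(0)=0$ whenever $k\geq 1$, this evaluation is $0$ as soon as some $k_i\geq 1$; homogeneity of positive degree forces $\sum_i k_i\geq 1$ in every monomial of $f_i$, so $f_i({\bf 0})=0$ by linearity. (A zero polynomial among the $f_i$ is even easier, as it vanishes identically on $F^{n,\sigma}$; a nonzero constant $f_i$ is excluded by reading ``homogeneous'' as having positive degree, since such an $f_i$ would make $\mathscr{V}$ empty and the statement false.) Hence ${\bf 0}\in\mathscr{V}(f_1,\dots,f_r)$, so $|\mathscr{V}(f_1,\dots,f_r)|\geq 1$.

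Finally, the hypothesis $\deg(f_1)+\dots+\deg(f_r)<n\frac{q^{\frac{1}{o(\sigma)}}-1}{q-1}$ is precisely the hypothesis of Theorem \ref{thm:skewchevwar}, which yields $|\mathscr{V}(f_1,\dots,f_r)|\equiv 0\pmod p$. A non-negative integer that is both at least $1$ and divisible by $p$ is at least $p$, whence $|\mathscr{V}(f_1,\dots,f_r)|\geq p$, as claimed.

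I do not expect any genuine obstacle: the corollary is a formal consequence of Theorem \ref{thm:skewchevwar} once the two elementary facts above --- that ${\bf 0}\in F^{n,\sigma}$ and that positive-degree homogeneous skew polynomials vanish at the origin --- are in place. The only point meriting a moment's attention is the behaviour of the $\sigma$-evaluation at the origin, which is immediate from the definition of the partial norms ${\rm N}^\sigma_k$.
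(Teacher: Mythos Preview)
Your argument is correct and follows exactly the paper's approach: it also simply notes that ${\bf 0}\in\mathscr{V}(f_1,\dots,f_r)$ and then invokes Theorem \ref{thm:skewchevwar}. You have merely spelled out in more detail why ${\bf 0}$ lies in $F^{n,\sigma}$ and why homogeneous skew polynomials vanish there, and you have flagged the degree-zero edge case that the paper leaves implicit.
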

\begin{proof}
Since ${\bf 0} \in \mathscr{V}(f_1,\dots,f_r)$, Theorem \ref{thm:skewchevwar} implies that $|\mathscr{V}(f_1,\dots,f_r)|\geq p$. 
\end{proof}
\section{Skew Finitesatz}\label{sec:skewfinitesatz}

In the section, we establish the results related to the skew Finitesatz.

We use the same notation as in Section~\ref{sec:cheval}. Additionally, for any subset $J\subset F[x_1,\dots,x_n;\sigma]$ and $W \subset F^{n,\sigma}$, define
$$
\mathscr{V}(J)=\left\lbrace {\bf a} \in F^{n,\sigma} \mid f({\bf a})=0\,\,\textrm{for all}\,\, f \in J \right\rbrace
$$ and
$$
\mathscr{I}(W)=\left\lbrace f \in F[x_1,\dots,x_n;\sigma] \mid f({\bf a})=0\,\,\textrm{for all}\,\, {\bf a} \in W \right\rbrace.
$$
\subsection{Weak Skew Finitesatz}

In this part we prove a version of the skew Finitesatz for ideals with an empty zero locus -- an analogue of Hilbert's ``weak" Nullstellensatz. 

\begin{thm}[Weak skew finitesatz]\label{thm:skewfinsatz} 
Let $J$ be a left ideal in $F[x_1,\ldots,x_n;\sigma]$ with $\mathscr{V}(J) = \emptyset$. Then 
$$F[x_1,\dots,x_n;\sigma]=\mathscr{I}(\mathscr{V}(J))=J+\mathscr{I}(F^{n,\sigma}).$$ 
\end{thm}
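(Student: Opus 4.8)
The plan is to reduce the weak skew Finitesatz to the skew Combinatorial Nullstellensatz (Theorem~\ref{new2}), in analogy with the classical deduction of the weak Nullstellensatz over finite fields from a dimension/grid argument. Since $\mathscr{I}(\mathscr{V}(J)) = \mathscr{I}(\emptyset) = F[x_1,\dots,x_n;\sigma]$ trivially, the entire content is the equality $F[x_1,\dots,x_n;\sigma] = J + \mathscr{I}(F^{n,\sigma})$. Equivalently, writing $\overline{R} = R/\mathscr{I}(F^{n,\sigma})$ where $R = F[x_1,\dots,x_n;\sigma]$, I must show that the image $\overline{J}$ of $J$ in $\overline{R}$ is all of $\overline{R}$. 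The key structural fact I would establish first is that $\overline{R}$ is a \emph{finite} ring: evaluation at points of $F^{n,\sigma}$ gives a well-defined map $R \to \prod_{{\bf a}\in F^{n,\sigma}} F$ (well-definedness on the target coordinate uses Lemma~\ref{lem:evalformon}), whose kernel is exactly $\mathscr{I}(F^{n,\sigma})$, so $\overline{R}$ embeds as a left $F$-subspace of the finite-dimensional space $\prod_{{\bf a}} F$, hence $\dim_F \overline{R} \le |F^{n,\sigma}| < \infty$. (Here one should be slightly careful about sides: $\mathscr{I}(F^{n,\sigma})$ is a left ideal, so $\overline{R}$ is a left $F$-module; this is enough.)

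Next I would argue by contradiction: suppose $J + \mathscr{I}(F^{n,\sigma}) \ne R$. Then $\overline{J}$ is a proper left ideal of the finite ring $\overline{R}$, so it is contained in a maximal left ideal $\overline{\mathfrak{m}}$ of $\overline{R}$; pulling back, $J + \mathscr{I}(F^{n,\sigma})$ sits inside a maximal left ideal $\mathfrak{m}$ of $R$ containing $\mathscr{I}(F^{n,\sigma})$. The crux is then to show that every maximal left ideal of $R$ containing $\mathscr{I}(F^{n,\sigma})$ is of the form $\mathfrak{m}_{{\bf a}}$ for some ${\bf a} \in F^{n,\sigma}$ — which would immediately give $\mathscr{V}(J) \ne \emptyset$ (since $J \subseteq \mathfrak{m}_{{\bf a}}$ forces $f({\bf a}) = 0$ for all $f \in J$ by Lemma~\ref{lem:evalformon}), contradicting the hypothesis. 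This classification of maximal left ideals over $\mathscr{I}(F^{n,\sigma})$ is where the Combinatorial Nullstellensatz enters: one wants to say that a proper left ideal containing $\mathscr{I}(F^{n,\sigma})$ cannot "avoid" all points of $F^{n,\sigma}$, i.e.\ if a left ideal $I \supseteq \mathscr{I}(F^{n,\sigma})$ has $\mathscr{V}(I) = \emptyset$, then $I = R$.

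The way I expect to use Theorem~\ref{new2} for that last step: for each variable $x_i$, the set $F^{n,\sigma}$ projects into a $\sigma$-algebraic subset $A_i \subseteq F$ (concretely, from Lemma~\ref{lem:carfcn} the relevant building blocks are the cosets $\omega_\lambda K$, and one can take $A_i$ to be a fixed $\mathfrak{S}_\lambda = \omega_\lambda K$, which is $\sigma$-algebraic of $\sigma$-rank $|K| = q^{1/o(\sigma)}$), so that $A_1 \times \cdots \times A_n \subseteq F^{n,\sigma}$. If $J + \mathscr{I}(F^{n,\sigma})$ were proper, pick $g \in R \setminus (J + \mathscr{I}(F^{n,\sigma}))$; I would want to produce from the hypothesis $\mathscr{V}(J) = \emptyset$ a polynomial of controlled degree in $J$ that is nonvanishing on the whole grid $A_1\times\cdots\times A_n$, then invoke Theorem~\ref{new2} to get a contradiction — or, more cleanly, run the argument block $\mathfrak{S}_\lambda^n$ by block: on each $\mathfrak{S}_\lambda^n = \omega_\lambda K^n$, the restriction-to-$K^n$ trick of Lemma~\ref{lem:resttoflam} converts skew polynomials to ordinary polynomials over $F$ on the commutative grid $K^n$, where the classical (commutative) weak Nullstellensatz/Finitesatz over the finite field $K$ applies directly; assembling the blocks via the disjoint union $F^{n,\sigma} = \bigsqcup_\lambda \omega_\lambda K^n$ (Lemma~\ref{lem:carfcn}(2)) and a Chinese-Remainder-type decomposition of $\overline{R}$ then yields $1 \in J + \mathscr{I}(F^{n,\sigma})$.

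\textbf{Main obstacle.} The delicate point is the noncommutative bookkeeping: $J$ is only a \emph{left} ideal, $\mathscr{I}(F^{n,\sigma})$ is a \emph{left} ideal, and $\overline{R}$ need not be commutative, so "maximal ideal $\leftrightarrow$ point" must be handled through left ideals $\mathfrak{m}_{{\bf a}}$ (Proposition~\ref{prop:maproper} already supplies the key input: $\mathfrak{m}_{{\bf a}}$ is a \emph{maximal} left ideal). I expect the real work to be showing that $\mathscr{I}(F^{n,\sigma}) = \bigcap_{{\bf a}\in F^{n,\sigma}} \mathfrak{m}_{{\bf a}}$ and that these $\mathfrak{m}_{{\bf a}}$ exhaust the maximal left ideals lying over it — i.e.\ that $\overline{R}$ is (left-)semisimple with its simple factors indexed by $F^{n,\sigma}$ — so that a proper $\overline{J}$ must be killed by some point. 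The commutative-reduction route via Lemma~\ref{lem:resttoflam} is the safest way to make this rigorous without developing noncommutative semisimplicity from scratch, and that is the step I would write out in full.
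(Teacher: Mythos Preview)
Your route is genuinely different from the paper's, and the step you yourself flag as the ``main obstacle'' is a real gap that your proposed tools do not close. The crux is the claim that every maximal left ideal of $R$ containing $\mathscr{I}(F^{n,\sigma})$ is some $\mathfrak{m}_{\mathbf a}$ (equivalently: if $J\not\subseteq\mathfrak m_{\mathbf a}$ for all $\mathbf a$, then $\overline J=\overline R$). Neither Theorem~\ref{new2} nor the block reduction via Lemma~\ref{lem:resttoflam} delivers this. The Combinatorial Nullstellensatz controls when a \emph{single} polynomial with a prescribed leading monomial is nonzero on a grid; it says nothing about whether a proper left ideal can avoid every point-ideal. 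The map $f\mapsto f^\lambda$ of Lemma~\ref{lem:resttoflam} is $F$-linear but \emph{not} multiplicative (the product formula twists the base point by a $\sigma$-conjugate), so $\{f^\lambda:f\in J\}$ is merely an $F$-subspace of $F[x_1,\dots,x_n]$, not an ideal, and the commutative Finitesatz over $K$ does not apply to it. For the same reason, evaluation $f\mapsto(f(\mathbf a))_{\mathbf a}$ is not a ring homomorphism, so the ``Chinese-Remainder decomposition of $\overline R$'' you invoke is unavailable: already for $n=1$, $F=\mathbb F_4$, $\sigma=\mathrm{Frob}$, one finds $\overline R\cong M_2(\mathbb F_2)\times\mathbb F_4$, so ``simple factors indexed by $F^{n,\sigma}$'' is false as stated --- the Wedderburn blocks are indexed by $\sigma$-conjugacy classes, and within a matrix block there are many maximal left ideals that you must match up with points by some further argument you have not supplied.

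The paper bypasses all of this structure theory with a short direct argument using only the product formula (Lemma~\ref{lem:prodformskewmult}) and finiteness. One takes $A\subseteq F^{n,\sigma}$ maximal for which some $g\in J$ has $g\equiv 1$ on $A$. If $A=F^{n,\sigma}$ then $1=g+(1-g)\in J+\mathscr I(F^{n,\sigma})$. Otherwise, pick $\mathbf b\notin A$; either $\mathbf b^{\,1-g(\mathbf b)}\notin A$, in which case choosing $h\in J$ with $h(\mathbf b^{\,1-g(\mathbf b)})=1$ and setting $\tilde g=g+h-hg\in J$ yields $\tilde g\equiv 1$ on $A\cup\{\mathbf b\}$ (contradicting maximality), or $\mathbf b^{\,1-g(\mathbf b)}\in A$ for every such $\mathbf b$, in which case the product formula gives $(1-g)^2\equiv 0$ on $F^{n,\sigma}$ and hence $1=(2g-g^2)+(1-g)^2\in J+\mathscr I(F^{n,\sigma})$.
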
 
\begin{proof} We must show that $1 \in J+\mathscr{I}(F^{n,\sigma})$. For that, let $A$ be a maximal subset of $F^{n,\sigma}$ for which there exists an element $g \in J$ satisfying $g({\bf a}) = 1$ for all ${\bf a} \in A$.\footnote{since $F^{n,\sigma}$ is finite, such a maximal subset exists.} Let us assume that $g$ is such a polynomial. 

Assume first that $A = F^{n,\sigma}$. In this case $1-g$ vanishes on $F^{n,\sigma}$. Hence $1 = g+(1-g) \in J+\mathscr{I}(F^{n,\sigma})$.

Assume now that $A$ is a proper subset of $F^{n,\sigma}$. Then $g({\bf b}) \neq 1$ for all ${\bf b}  \in F^{n,\sigma}\setminus A$, by the maximality assumption. We now distinguish between two cases.
\vskip 1mm
\noindent
$\bullet$ First suppose that there exists ${\bf b}  \in F^{n,\sigma}\setminus A$ such that ${\bf b}^{1-g({\bf b})} \notin A$. Since $\mathscr{V}(J) = \emptyset$, there exists $h \in J$ such that $h({\bf b}^{1-g({\bf b})}) \neq 0$. By multiplying $h$ from the left by a scalar, we may assume that $h(b^{1-g({\bf b})}) =1$. Then $\big((1-h)(1-g)\big)({\bf a}) = 0$ for all ${\bf a} \in A$. Moreover, by the product formula (Lemma \ref{lem:prodformskewmult}), we have:

$$\big((1-h)(1-g)\big)({\bf b}) = (1-h({\bf b}^{1-g({\bf b})}))(1-g({\bf b})) = (1-1)(1-g({\bf b}))= 0.$$
Consider $\tilde{g} = g+h-hg$. Then $\tilde{g} \in J$ and by the above we have $\tilde{g}({\bf a})=1$ for all ${\bf a} \in A \cup \{{\bf b}\}$, contradicting the maximality of the set $A$. 
\vskip 1mm
\noindent
$\bullet$ Now suppose that ${\bf b}^{1-g(b)} \in A$ for all ${\bf b}  \in F^{n,\sigma}\setminus A$. Then $$(1-g)^2({\bf b}) = (1-g({\bf b}^{1-g({\bf b})})) \cdot (1-g({\bf b})) =(1-1) \cdot (1-g({\bf b})) = 0$$
for all ${\bf b}  \in F^{n,\sigma}\setminus A$. Moreover we have $(1-g)^2({\bf a}) = 0$ for all ${\bf a} \in A$. Thus $(1-g)^2$ vanishes everywhere at $F^{n,\sigma}$, hence $$1 = (2g-g^2)+(1-g)^2\in J+ \mathscr{I}(F^{n,\sigma}),$$
as needed. 

Consequently $$F[x_1,\dots,x_n;\sigma]=\mathscr{I}(\mathscr{V}(J))=J+\mathscr{I}(F^{n,\sigma}).$$ 
\end{proof}

\subsection{On the description of $\mathscr{I}(F^{n,\sigma})$}
In this part, we provide a complete description of the vanishing ideal of the entire $\sigma$-affine space $F^{n,\sigma}$.
The following result is a slight generalization of \cite[Remark 2.4]{Ler12}.
\begin{lem}\label{lem:lcmfinifield}
We have
$$
\lcm(x-a \mid a \in F)=x^{\frac{m}{\theta}(p^\theta-1)+1}-x.
$$
\end{lem}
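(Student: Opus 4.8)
The plan is to identify $\lcm(x-a \mid a \in F)$ directly by exhibiting an explicit polynomial with the right properties and then invoking Lemma \ref{union} for minimality. First I would observe that a polynomial $p \in F[x;\sigma]$ is right-hand divisible by $x-a$ precisely when $p(a)=0$, so $\lcm(x-a \mid a \in F)$ is the monic polynomial of smallest degree vanishing at every element of $F$. Set $e = \frac{m}{\theta}(p^\theta-1)$ and consider $P(x) = x^{e+1}-x = x(x^e - 1)$. I claim $P$ vanishes on all of $F$. Since $P(0)=0$ trivially, it remains to check $P(a)=0$ for $a \in F^\times$, i.e. that $x^e$ sends every nonzero $a$ to $a$ under $\sigma$-evaluation. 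By \eqref{eq:defeval} in one variable, $[x^e](a) = {\rm N}^\sigma_e(a) = \sigma^{e-1}(a)\cdots\sigma(a)a$. Because $\sigma = {\rm Frob}^k$ has order $o(\sigma) = m/\theta$ and $e = \frac{m}{\theta}(p^\theta - 1)$ is divisible by $m/\theta$, grouping the product into $p^\theta - 1$ consecutive blocks of length $m/\theta$ — each block being a full "norm-like" cycle $\sigma^{m/\theta - 1}(a)\cdots\sigma(a)a$ — shows ${\rm N}^\sigma_e(a) = \left({\rm N}^\sigma_{m/\theta}(a)\right)^{p^\theta - 1} \cdot (\text{appropriate }\sigma\text{-twists})$; more cleanly, ${\rm N}^\sigma_{m/\theta}(a) = \sigma^{m/\theta - 1}(a)\cdots\sigma(a)a$ and since the product $\prod_{i=0}^{m/\theta-1}\sigma^i$ of automorphisms equals the norm map $N_{F/K}$ (as $\sigma$ generates $\mathrm{Gal}(F/K)$), we get ${\rm N}^\sigma_{m/\theta}(a) = N_{F/K}(a) \in K^\times = \F_{p^\theta}^\times$. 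Then ${\rm N}^\sigma_e(a)$ is, after absorbing the $\sigma$-twists (which act trivially on $K$), equal to $N_{F/K}(a)^{p^\theta - 1} = 1$ by Lagrange's theorem in the group $K^\times$ of order $p^\theta - 1$. Hence $[x^e](a) = 1$ and $P(a) = 0$.

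Next I would establish minimality. Suppose $Q$ is monic of degree $\le e$ vanishing at every element of $F$. By Lemma \ref{union} it suffices to show $\deg Q \ge e+1$, equivalently to rule out any nonzero $Q$ of degree $\le e$ vanishing on all of $F$. Here I would use the fact (from the one-variable theory of Lam–Leroy, \cite{Lam86}, \cite{VL88}) that the number of roots of a nonzero skew polynomial, counted via $(\mathrm{P}, \sigma)$-independence / the rank of the associated $\sigma$-algebraic set, is bounded by its degree — more precisely, $\rk_\sigma(F) = \deg f_{F,\sigma}$, and a nonzero polynomial of degree $d$ cannot vanish on a $\sigma$-algebraic set of rank $> d$. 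So I must show $\rk_\sigma(F) = e + 1$, i.e. that $F$ itself, as a $\sigma$-algebraic subset of $F$, has $\sigma$-rank exactly $e+1$; combined with the first paragraph (which shows $\rk_\sigma(F) \le e+1$) this forces equality. To get $\rk_\sigma(F) \ge e+1$, I would exhibit a large $(\mathrm{P},\sigma)$-independent / $\sigma$-algebraically independent subset of $F$, or alternatively count: the conjugacy classes of $F$ under $\sigma$-conjugation $a \mapsto \sigma(b)ab^{-1}$, together with a degree count on the $\sigma$-minimal polynomials of the classes, must reproduce $|F| = q$; since $F^{n,\sigma}$ decomposes as in Lemma \ref{lem:carfcn}, one shows $f_{F,\sigma} = \prod_{\lambda}(\text{min. poly. of } \mathfrak{S}_\lambda)$-type product has degree $|\mathscr{W}_F| \cdot |K| - (\text{overlap at }0) = \frac{q-1}{p^\theta - 1}(p^\theta) - \left(\frac{q-1}{p^\theta-1} - 1\right) = \frac{(q-1)p^\theta - (q-1) + (p^\theta-1)}{p^\theta - 1} = \frac{(q-1)(p^\theta - 1) + (p^\theta - 1)}{p^\theta-1} = q$... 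I would instead directly compute that the claimed degree $e+1 = \frac{m}{\theta}(p^\theta - 1) + 1$ matches $\rk_\sigma(F)$ by noting $F \setminus \{0\}$ is a single $\sigma$-conjugacy class (since $\mathscr{W}_F = \{\sigma(b)b^{-1}\}$ and norms) when... actually more carefully: the nonzero $\sigma$-conjugacy classes correspond to $F^\times / \mathscr{W}_F$-type data, and the $\sigma$-minimal polynomial of the class of $1$ — the set $\{\sigma(b)b^{-1} : b \in F^\times\} = \mathscr{W}_F$ — has degree $|\mathscr{W}_F| = \frac{q-1}{p^\theta-1}$, so tracking all classes and the point $0$ gives total rank matching the count in Lemma \ref{lem:carfcn}(3).

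The main obstacle, I expect, is the minimality/lower-bound half: showing no nonzero skew polynomial of degree $\le e$ kills all of $F$. The clean route is to appeal to the Lam–Leroy result that $\rk_\sigma(A) \le \deg(p)$ whenever nonzero $p$ vanishes on a $\sigma$-algebraic set $A$, and then to pin down $\rk_\sigma(F)$ exactly by a counting argument using the orbit decomposition of $F$ under $\sigma$-conjugacy (mirroring Lemma \ref{lem:carfcn}): the $\sigma$-minimal polynomial of a $\sigma$-conjugacy class of an element with norm $\lambda$ has a predictable degree, and summing over the $\frac{q-1}{p^\theta-1}$ classes plus the origin yields $\deg f_{F,\sigma} = \frac{m}{\theta}(p^\theta - 1) + 1$. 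Since \cite[Remark 2.4]{Ler12} already records essentially this fact, I would cite it for the exact value and keep the self-contained part to the easy direction, $P(x) = x^{e+1}-x$ vanishes on $F$, which is the genuinely computational step and which I sketched above via the identity ${\rm N}^\sigma_{m/\theta} = N_{F/K}$ on $F^\times$.
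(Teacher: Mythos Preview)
The paper's own proof is a one-line citation (``follows the same reasoning as \cite[Remark 2.4]{Ler12}''), so there is almost nothing to compare against. Your vanishing half is correct and is strictly more than the paper offers: the key identity ${\rm N}^\sigma_{m/\theta}(a)=N_{F/K}(a)\in K^\times$ together with $|K^\times|=p^\theta-1$ cleanly gives ${\rm N}^\sigma_e(a)=1$ for $a\in F^\times$, and then $P(a)={\rm N}^\sigma_{e+1}(a)-a=\sigma^e(a)\cdot 1-a=a-a=0$ since $o(\sigma)\mid e$. (Minor slip: you wrote ``$x^e$ sends every nonzero $a$ to $a$'' but then correctly compute $[x^e](a)=1$.)

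Your minimality half, however, is where you drift. The decomposition $F^{n,\sigma}=\bigcup_\lambda \mathfrak{S}_\lambda$ from Lemma~\ref{lem:carfcn} is \emph{not} the decomposition into $\sigma$-conjugacy classes, and treating it as such is why your degree count lands on $q$ rather than $e+1$ (these differ already for $q=4$, $\sigma={\rm Frob}$, where $e+1=3$). In a commutative $F$ the $\sigma$-conjugate of $a$ by $c$ is $a\cdot\sigma(c)c^{-1}$, so the nonzero $\sigma$-conjugacy classes are the cosets $a\mathscr{W}_F$; there are $|F^\times|/|\mathscr{W}_F|=p^\theta-1$ of them, each a full class with $\sigma$-rank $[F:K]=m/\theta$ (this is the Lam--Leroy fact you allude to), and together with $\{0\}$ of rank $1$ one gets $\rk_\sigma(F)=(p^\theta-1)\cdot\frac{m}{\theta}+1=e+1$, since ranks of sets lying in pairwise distinct $\sigma$-conjugacy classes add. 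That is the computation underlying \cite[Remark 2.4]{Ler12}. Since you ultimately fall back on that same citation --- exactly as the paper does --- your proposal is acceptable as written; but if you want the argument to be self-contained, replace the $\mathfrak{S}_\lambda$ bookkeeping by the coset decomposition $F^\times=\bigsqcup a\mathscr{W}_F$.
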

\begin{proof}
The proof follows the same reasoning as that of \cite[Remark 2.4]{Ler12}.
\end{proof}
\begin{lem}\label{lem:nullvanismalldeg}
Let $h \in \mathscr{I}(F^{n,\sigma})$ be such that:
\begin{enumerate}[i)]
\item
$\deg_{x_{i}}(h) \leq \frac{m}{\theta}(p^\theta-1)$, for all $1 \leq i \leq n$; \label{eq1:lem:nullvanismalldeg}
\item for every $  1 \leq j<i\leq n$, no monomial in $h$ is divisible by $x_ix_j^{p^\theta}$.\label{eq2:lem:nullvanismalldeg}
\end{enumerate}
Then $h=0$.
\end{lem}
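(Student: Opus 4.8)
Throughout put $N:=\frac{m}{\theta}(p^\theta-1)$, so the first hypothesis reads $\deg_{x_i}(h)\le N$, and recall $\sigma={\rm Frob}^k$, $|K|=p^\theta$, $|\mathscr{W}_F|=\frac{q-1}{p^\theta-1}$. The plan is to pass, via Lemma~\ref{lem:resttoflam}, to ordinary polynomials over $K=\mathbb{F}_{p^\theta}$, reduce modulo the relations $x_i^{p^\theta}-x_i$, and then isolate a linear‑algebra fact about power functions on the cyclic group $\mathscr{W}_F$. Write $h=\sum_{\mathbf k}a_{\mathbf k}x_1^{k_1}\cdots x_n^{k_n}$. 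By $F^{n,\sigma}=\bigcup_{\lambda\in\mathscr{W}_F}\omega_\lambda K^n$ (Lemma~\ref{lem:carfcn}) and Lemma~\ref{lem:resttoflam}, $h\in\mathscr{I}(F^{n,\sigma})$ is equivalent to: for every $\lambda\in\mathscr{W}_F$ the commutative polynomial $h^\lambda$ vanishes on $K^n$. Fix $\lambda$ and reduce $h^\lambda$ modulo $\langle x_i^{p^\theta}-x_i:1\le i\le n\rangle$; this pushes every exponent into $\{0,\dots,p^\theta-1\}$, and the reduced polynomial, vanishing on $K^n=\mathbb{F}_{p^\theta}^n$, is identically $0$, so the $h^\lambda$‑coefficients of monomials with a common reduction sum to $0$. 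Here the second hypothesis enters: if $i^\ast=\max\{i:k_i>0\}$ then no monomial of $h$ is divisible by $x_{i^\ast}x_j^{p^\theta}$ for $j<i^\ast$, so $k_j<p^\theta$ for $j<i^\ast$ and the reduction alters only the exponent of the top variable $x_{i^\ast}$, which by the first hypothesis is $\le N$. Hence the monomials of $h$ reducing to a fixed $x^{\bar{\mathbf k}}$ (top variable $x_i$, $1\le\bar k_i\le p^\theta-1$) are exactly $\mathbf k^{(t)}=(\bar k_1,\dots,\bar k_{i-1},\bar k_i+t(p^\theta-1),0,\dots,0)$ for $t=0,\dots,\frac{m}{\theta}-1$, and for every such $\bar{\mathbf k}$ and every $\lambda$ we obtain
\[
\sum_{t=0}^{m/\theta-1}a_{\mathbf k^{(t)}}\,\bigl[x_1^{\bar k_1}\cdots x_i^{\bar k_i+t(p^\theta-1)}\bigr](\omega_\lambda,\dots,\omega_\lambda)=0,
\]
the constant term of $h$ being the case $\bar{\mathbf k}=\mathbf 0$. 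It remains to see these systems force all $a_{\mathbf k^{(t)}}=0$.

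Evaluating by \eqref{eq:defeval} and the identity ${\rm N}^\sigma_{a+b}={\rm N}^\sigma_a\cdot\sigma^a({\rm N}^\sigma_b)$, every factor except the one carrying the exponent $t(p^\theta-1)$ is independent of $t$, so the evaluation above equals $C_{\bar{\mathbf k}}(\lambda)\cdot\sigma^{s}\bigl({\rm N}^\sigma_{t(p^\theta-1)}(\omega_\lambda)\bigr)$ with $C_{\bar{\mathbf k}}(\lambda)\in F^\times$ and $s=\bar k_1+\dots+\bar k_i$. Now ${\rm N}^\sigma_r(\omega_\lambda)=\omega_\lambda^{e_r}$ with $e_r=1+p^k+\dots+p^{k(r-1)}$, and $p^\theta-1\mid e_{t(p^\theta-1)}$ (since $e_r\equiv r\pmod{p^k-1}$ and $p^\theta-1\mid p^k-1$); combining this with $\omega_\lambda^{p^\theta-1}=\lambda^{a}$ for a fixed integer $a$ coprime to $|\mathscr{W}_F|$ (obtained by raising $\omega_\lambda^{p^k-1}=\lambda$ to a suitable power) gives ${\rm N}^\sigma_{t(p^\theta-1)}(\omega_\lambda)=\lambda^{\,a\,e_{t(p^\theta-1)}/(p^\theta-1)}$, a bona fide power of $\lambda$. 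Dividing out $C_{\bar{\mathbf k}}(\lambda)$ and absorbing $\sigma^s$, the relations become $\sum_t a_{\mathbf k^{(t)}}\lambda^{\nu_t}=0$ for all $\lambda\in\mathscr{W}_F$, with $\nu_t=a\,p^{ks}\,e_{t(p^\theta-1)}/(p^\theta-1)$; since $p$ is prime to $q-1$ and $a$ is a unit mod $|\mathscr{W}_F|$, it suffices to show the $\nu_t$, equivalently the integers $e_{t(p^\theta-1)}$, are pairwise distinct modulo $|\mathscr{W}_F|$, equivalently modulo $(p^\theta-1)|\mathscr{W}_F|=q-1$.

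To prove that distinctness, note that modulo $q-1=p^m-1$ the powers $p^{ks}$ are periodic in $s$ of period $o(\sigma)=\frac{m}{\theta}$, and one full period sums to $\sum_{s=0}^{m/\theta-1}p^{ks}\equiv\sum_{i=0}^{m/\theta-1}p^{i\theta}=\frac{p^m-1}{p^\theta-1}=|\mathscr{W}_F|$. Writing $t(p^\theta-1)=\frac{m}{\theta}q_t+r_t$ with $0\le r_t<\frac{m}{\theta}$ gives $e_{t(p^\theta-1)}\equiv q_t|\mathscr{W}_F|+\rho_t\pmod{q-1}$, where $\rho_t$ is a sum of $r_t<\frac{m}{\theta}$ distinct elements of $\{1,p^\theta,\dots,p^{m-\theta}\}$, so $0\le\rho_t<|\mathscr{W}_F|$, while $q_t\le p^\theta-2$ because $t\le\frac{m}{\theta}-1$; thus $e_{t(p^\theta-1)}\bmod(q-1)$ determines $(q_t,\rho_t)$, and since $r\mapsto\sum_{s=0}^{r-1}p^{ks}\bmod(q-1)$ is strictly increasing on $\{0,\dots,\frac{m}{\theta}-1\}$ it determines $r_t$, hence $t(p^\theta-1)$, hence $t$. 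So the $e_{t(p^\theta-1)}$ are pairwise distinct mod $q-1$; since pairwise distinct power functions $\lambda\mapsto\lambda^{c}$ on the cyclic group $\mathscr{W}_F$ are $F$‑linearly independent (Vandermonde in a generator of $\mathscr{W}_F$, using $|\mathscr{W}_F|\ge\frac{m}{\theta}$), the relations force all $a_{\mathbf k^{(t)}}=0$, whence $h=0$.

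The main obstacle is the middle step: turning the raw evaluation $\bigl[x_1^{\bar k_1}\cdots x_i^{\bar k_i+t(p^\theta-1)}\bigr](\omega_\lambda,\dots,\omega_\lambda)$ into a clean power of $\lambda$ — which needs both $p^\theta-1\mid e_{t(p^\theta-1)}$ and the identity $\omega_\lambda^{p^\theta-1}=\lambda^a$ — together with the small but fiddly number‑theoretic lemma that $t\mapsto e_{t(p^\theta-1)}\bmod(q-1)$ is injective for $t\in\{0,\dots,\frac{m}{\theta}-1\}$. It is precisely the first hypothesis that caps $t$ at $\frac{m}{\theta}-1$, keeping these exponents inside a single period (for larger $t$ they repeat and the conclusion fails), and the second that confines the only non‑reduced exponent to the top variable, so that the fibres $\{\mathbf k^{(t)}\}_t$ are one‑parameter families and the whole computation stays tractable.
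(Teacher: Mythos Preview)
Your argument is correct, but it follows a substantially different route from the paper's proof. The paper proceeds by induction on $n$: the base case $n=1$ uses only that $\lcm(x-a\mid a\in F)=x^{N+1}-x$; in the inductive step one expands $h=\sum_{e'\le e}x_1^{e'}h_{e'}(x_2,\dots,x_n)$, and splits into two cases on the $x_1$-degree $e$. If $e\le p^\theta-1$, then evaluating at $\omega_\lambda(a_1,\dots,a_n)\in\omega_\lambda K^n$ with $a_2,\dots,a_n$ fixed gives a polynomial of degree $e<|K|$ in $a_1$ vanishing on $K$, forcing $h_e$ to vanish on $F^{n-1,\sigma}$ and invoking the inductive hypothesis. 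If $e>p^\theta-1$, hypothesis~(ii) forces $h_e$ to be a nonzero constant, and then evaluating $h$ along the line $(\omega_\lambda a_1,0,\dots,0)$ produces a univariate skew polynomial of degree $e$ vanishing on all of $F$, so $e\ge N+1$, contradicting hypothesis~(i).

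By contrast, you avoid induction entirely: you pass via $h^\lambda$ to commutative polynomials on $K^n$, reduce modulo $\langle x_i^{p^\theta}-x_i\rangle$, and observe that hypothesis~(ii) forces the fibres of this reduction (among monomials permitted in $h$) to be one-parameter families indexed by $t\in\{0,\dots,\tfrac{m}{\theta}-1\}$; the vanishing conditions then become, for each reduced monomial, a linear system in the $a_{\mathbf k^{(t)}}$ with coefficient matrix $\bigl(\lambda^{\mu_t}\bigr)_{\lambda\in\mathscr{W}_F}$, and you conclude by showing the exponents $\mu_t$ are distinct modulo $|\mathscr{W}_F|$ via the number-theoretic lemma on $e_{t(p^\theta-1)}\bmod(q-1)$. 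The paper's proof is considerably shorter and sidesteps both the arithmetic of the $e_r$ and the explicit identification $\omega_\lambda^{p^\theta-1}=\lambda^a$; your approach, while more computational, makes transparent the linear-algebraic mechanism---the coefficients of $h$ are annihilated by a Vandermonde-type matrix built from the characters of $\mathscr{W}_F$---and isolates exactly how hypothesis~(i) (capping $t$ at $\tfrac{m}{\theta}-1$) keeps those characters distinct.
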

\begin{proof}
We prove the result by induction on $n \geq 1$. 

For $n=1$, let $h \in \mathscr{I}(F)$ satisfy \eqref{eq1:lem:nullvanismalldeg} and \eqref{eq2:lem:nullvanismalldeg}. By Lemma \ref{lem:lcmfinifield}, $x_1^{\frac{m}{\theta}(p^\theta-1)+1}-x_1$ divides $h$. Then, from \eqref{eq1:lem:nullvanismalldeg}, we obtain $h=0$. Therefore the theorem holds for $n=1$

Now, assume that the theorem holds for all integers $1,\dots,n-1$ with $n \geq 2$. Let $h \in \mathscr{I}(F^{n,\sigma})$ satisfy \eqref{eq1:lem:nullvanismalldeg} and \eqref{eq2:lem:nullvanismalldeg}. Assume to the contrary that $h \neq 0$. Write 
$$h=x_1^e h_{e}(x_2,\dots,x_n)+x_1^{e-1} h_{e-1}(x_2,\dots,x_n)+\cdots+x_1 h_{1}(x_2,\dots,x_n)+ h_{0}(x_2,\dots,x_n),
$$ where $h_i(x_2,\dots,x_n) \in F[x_2,\dots,x_n;\sigma]$ ($1 \leq i \leq e$) and $h_e(x_2,\dots,x_n)\neq 0$. We consider two cases.
\vskip 1mm
\noindent
$\bullet$ {\bf First suppose that $e\leq p^\theta-1$}. Let $\lambda \in \mathscr{W}_F$. Then, for every ${\bf v}=\omega_\lambda (a_1,\dots,a_n)\in \mathfrak{S}_{\lambda}^n= \omega_\lambda K^n$, using Remark \ref{substitution}, we have
\begin{align}\label{eq:hvv}
0=h({\bf v})&={\rm N}^\sigma_e(\omega_\lambda a_1)(h_e(\omega_\lambda(a_2,\dots,a_n)))^{p^e}+\cdots+{\rm N}^\sigma_1(\omega_\lambda a_1)(h_1(\omega_\lambda(a_2,\dots,a_n)))^p+h_0(\omega_\lambda(a_2,\dots,a_n))\\
&=a_1^e {\rm N}^\sigma_e(\omega_\lambda)(h_e(\omega_\lambda(a_2,\dots,a_n)))^{p^e}+\cdots+a_1{\rm N}^\sigma_1(\omega_\lambda)(h_1(\omega_\lambda(a_2,\dots,a_n)))^p+h_0(\omega_\lambda(a_2,\dots,a_n))\nonumber.
\end{align} Fix $(a_2,\dots,a_n) \in K^{n-1}$. Since $e \leq p^\theta-1$, ${\rm N}^\sigma_e(\omega_\lambda) \neq 0$ and \eqref{eq:hvv} holds for all $a_1 \in K$, we have $h_e(\omega_{\lambda}(a_2,\dots,a_n))=0$. Hence, for every $\lambda \in \mathscr{W}_F$, the polynomial $h_e \in F[x_2,\dots,x_n;\sigma]$ vanishes on $\mathfrak{S}_{\lambda}^{n-1}$. Therefore $h_e$ also vanishes on $F^{\sigma,n-1}=\cup_{\lambda \in \mathscr{W}_F} \mathfrak{S}_{\lambda}^{n-1}$. Since $h_e$ also satisfies \eqref{eq1:lem:nullvanismalldeg} and \eqref{eq2:lem:nullvanismalldeg} for all $2 \leq i,j \leq n$, the induction hypothesis implies that $h_e=0$, a contradiction.
\vskip 1mm
\noindent
$\bullet$ {\bf Now suppose that $e>p^\theta-1$}. In this case, we have $c=h_e(x_2,\dots,x_n) \in F^*$; otherwise $h$ would contain $x_jx_1^{p^\theta}$ for some $j>1$, contradicting \eqref{eq2:lem:nullvanismalldeg}. Note that
$$
0=h(\omega_\lambda(a_1,0,\dots,0))=c^{p^e} {\rm N}^\sigma_{e}(\omega_\lambda a_1)+{\rm N}^\sigma_{e-1}(\omega_\lambda a_1)h_{e-1}({\bf 0})^{p^{e-1}}+\cdots+{\rm N}^\sigma_1(\omega_\lambda a_1)h_1({\bf 0})^p+h_{0}({\bf 0}),
$$ for all $\lambda \in \mathscr{W}_{F}$ and all $a_1 \in K$. Letting 
$$g=c^{p^e} x_1^e+h_{e-1}({\bf 0})^{p^{e-1}}x_1^{e-1}+\cdots+h_1({\bf 0})^p x_1+h_0({\bf 0}) \in F[x_1;\sigma],$$ we see that $g$ vanishes on $F$. By Lemma \ref{lem:lcmfinifield}, $x_1^{\frac{m}{\theta}(p^\theta-1)+1}-x_1$ divides $g$, so $e \geq \frac{m}{\theta}(p^\theta-1)+1$, contradicting \eqref{eq1:lem:nullvanismalldeg}.

In both cases, we reach a contradiction. Consequently, we must have $h=0$, completing the proof by induction.
\end{proof}
\begin{thm}[Vanishing ideal of the $\sigma$-affine space]\label{thm:descrvanide}
We have
\begin{equation}\label{eq:desideasigmfqn}
\mathscr{I}(F^{n,\sigma})=\left\langle x_ix_j^{p^\theta}-x_i^{p^\theta}x_j, x_i^{\frac{m}{\theta}(p^\theta-1)+1}-x_i \bigg\vert 1 \leq i,j \leq n \right\rangle.
\end{equation}
\end{thm}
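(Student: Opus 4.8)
The plan is to prove the two inclusions $\supseteq$ and $\subseteq$ separately. Write $\mathfrak a$ for the left ideal on the right of \eqref{eq:desideasigmfqn} and put $N=\tfrac{m}{\theta}(p^\theta-1)$.

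\emph{Step 1: $\mathfrak a\subseteq\mathscr I(F^{n,\sigma})$.} It suffices to check that each generator vanishes everywhere on $F^{n,\sigma}$. For $x_i^{N+1}-x_i$: by Lemma \ref{lem:lcmfinifield} this equals $\lcm(x-a\mid a\in F)$ in $F[x;\sigma]$, hence its one-variable $\sigma$-evaluation vanishes at every $a\in F$; and for ${\bf a}\in F^{n,\sigma}$, formula \eqref{eq:defeval} (or Remark \ref{substitution}) gives $[x_i^{N+1}-x_i]({\bf a})={\rm N}^\sigma_{N+1}(a_i)-a_i$, which is exactly that value at $a_i$, so it is $0$. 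For $x_ix_j^{p^\theta}-x_i^{p^\theta}x_j$ we may assume $j<i$ (the case $i<j$ is its negative after swapping $i$ and $j$, and $i=j$ is trivial). By Lemma \ref{lem:carfcn} we have $F^{n,\sigma}=\bigcup_{\lambda\in\mathscr W_F}\omega_\lambda K^n$, so by Lemma \ref{lem:resttoflam} it is enough to show $(x_ix_j^{p^\theta}-x_i^{p^\theta}x_j)^\lambda$ vanishes (in the classical sense) on $K^n$ for every $\lambda\in\mathscr W_F$. A short computation with \eqref{eq:defeval} shows that under $(\cdot)^\lambda$ the monomials $x_j^{p^\theta}x_i$ and $x_jx_i^{p^\theta}$ acquire the common coefficient ${\rm N}^\sigma_{p^\theta+1}(\omega_\lambda)$, whence
$$(x_ix_j^{p^\theta}-x_i^{p^\theta}x_j)^\lambda={\rm N}^\sigma_{p^\theta+1}(\omega_\lambda)\big((x_j^{p^\theta}-x_j)x_i-(x_i^{p^\theta}-x_i)x_j\big),$$
which vanishes on $K^n$ because Frobenius fixes $K$.

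\emph{Step 2: $\mathscr I(F^{n,\sigma})\subseteq\mathfrak a$.} Given $h\in\mathscr I(F^{n,\sigma})$, I reduce it modulo $\mathfrak a$ using the generators as rewrite rules: whenever a monomial of the current polynomial has $x_i$-degree $\ge N+1$, apply $x_i^{N+1}\equiv x_i$; whenever a monomial is divisible by $x_ix_j^{p^\theta}$ with $j<i$, apply $x_ix_j^{p^\theta}\equiv x_i^{p^\theta}x_j$. Since $\mathfrak a$ is a left ideal and the $x_i$ commute, each step replaces the polynomial by one congruent to it modulo $\mathfrak a$, and it strictly lowers every affected exponent vector in the lexicographic order on $\mathbb N^n$ with $x_1$ most significant (the first rule lowers the $x_i$-entry; the second lowers the $x_j$-entry, $j<i$, while only raising a strictly later entry). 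This order is a well-order, and merging of terms only accelerates termination (Dershowitz--Manna multiset order), so the process halts at some $h'$ with $h-h'\in\mathfrak a$, satisfying $\deg_{x_i}(h')\le N$ for all $i$ and having no monomial divisible by any $x_ix_j^{p^\theta}$ with $j<i$. By Step 1, $h-h'\in\mathscr I(F^{n,\sigma})$, hence $h'\in\mathscr I(F^{n,\sigma})$, so $h'$ meets both hypotheses of Lemma \ref{lem:nullvanismalldeg} and therefore $h'=0$; thus $h=h-h'\in\mathfrak a$. Combining the two steps gives \eqref{eq:desideasigmfqn}.

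The two evaluations in Step 1 are routine; the only delicate points are the termination of the reduction in Step 2 and the observation that a fully reduced representative satisfies precisely the hypotheses of Lemma \ref{lem:nullvanismalldeg}. All the genuine content of the theorem is carried by that lemma.
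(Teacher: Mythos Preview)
Your proof is correct and follows the same two-inclusion strategy as the paper: for $\supseteq$ you use Lemma \ref{lem:lcmfinifield} and the decomposition $F^{n,\sigma}=\bigcup_\lambda\omega_\lambda K^n$ (via Lemma \ref{lem:resttoflam}) exactly as the paper does, and for $\subseteq$ you reduce modulo the generators and invoke Lemma \ref{lem:nullvanismalldeg}. Your termination argument via the lexicographic/Dershowitz--Manna multiset order is more explicit than the paper's, which simply asserts that one can ``repeatedly replace'' monomials until the hypotheses of Lemma \ref{lem:nullvanismalldeg} hold.
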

\begin{proof}
Denote by $J$ the left ideal on the RHS of \eqref{eq:desideasigmfqn}. 

First, we show that $J \subset \mathscr{I}(F^{n,\sigma})$. By Lemma \ref{lem:lcmfinifield}, the polynomial $x_i^{\frac{m}{\theta}(p^\theta-1)+1}-x_i$ vanishes on $F^{n,\sigma}$, for all $1 \leq i \leq n$. Additionally, for $1 \leq i,j \leq n$, a straightforward computation shows that the polynomial $x_ix_j^{p^\theta}-x_i^{p^\theta}x_j$ vanishes on each $\mathfrak{S}_{\lambda,n}=\omega_\lambda K^n$ ($\lambda  \in \mathscr{W}_{F}$), and so it also vanishes on $F^{n,\sigma}=\sum_{\lambda \in \mathscr{W}_F}\mathfrak{S}_{\lambda,n}$. Therefore $J \subset \mathscr{I}(F^{n,\sigma})$.

Conversely let $f \in \mathscr{I}(F^{n,\sigma})$. We claim that there exists $g \in J$ such that: 
\begin{enumerate}[i)]
\item
$\deg_{x_{i}}(f-g) \leq \frac{m}{\theta}(p^\theta-1)$, for all $1 \leq i \leq n$;
\item
and for every $1 \leq j < i \leq n$, no monomial in $f-g$ is divisible by $x_ix_j^{p^\theta}$;
\end{enumerate}
Indeed, we can repeatedly replace (modulo $J$) each $x_i^v$ ($v \geq 0$) in $f$ by the remainder of its right-hand division by $x_i^{\frac{m}{\theta}(p^\theta-1)+1}-x_i$; and also replace (modulo $J$) each $x_ix_j^{p^\theta}$ ($j < i$) in $f$ by $x_i^{p^\theta}x_j$. Since $f-g$ also vanishes on $F^{n,\sigma}$, Lemma \ref{lem:nullvanismalldeg} implies that $f-g=0$, and thus $f=g \in J$. Therefore $\mathscr{I}(F^{n,\sigma}) \subset J$.

Consequently, we obtain 
$$\mathscr{I}(F^{n,\sigma})=J=\left\langle x_ix_j^{p^\theta}-x_i^{p^\theta}x_j, x_i^{\frac{m}{\theta}(p^\theta-1)+1}-x_i \bigg\vert 1 \leq i,j \leq n \right\rangle,
$$ as was to be proved.
\end{proof}
\subsection{Skew Finitesatz: The one-variable case}
In this part, we prove Skew Finitesatz for one variable polynomials. Let $x$ be a variable.
\begin{thm}[One-variable skew Finitesatz]\label{thm:skewonedim}
Let $J$ be a nonzero left ideal of $F[x;\sigma]$. Then 
$$\mathscr{I}(\mathscr{V}(J))= J+F[x;\sigma]\cdot \left(x^{\frac{m}{\theta}(p^\theta-1)+1}-x\right)
.$$
\end{thm}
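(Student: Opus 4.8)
Since $F[x;\sigma]$ is a (left and right) principal ideal domain, write $J = F[x;\sigma]\cdot d$ for a monic $d$ of minimal degree in $J$. The inclusion $J + F[x;\sigma]\cdot(x^{N+1}-x) \subseteq \mathscr{I}(\mathscr{V}(J))$, where $N = \frac{m}{\theta}(p^\theta-1)$, is immediate: every $g \in J$ vanishes on $\mathscr{V}(J)$ by definition, and $x^{N+1}-x$ vanishes on all of $F = F^{1,\sigma}$ by Lemma \ref{lem:lcmfinifield}, hence in particular on $\mathscr{V}(J)$. The content of the theorem is the reverse inclusion. The plan is to compute both sides explicitly in terms of the zero set $\mathscr{V}(d) = \mathscr{V}(J) \subseteq F$, which is a finite $\sigma$-algebraic set, and to identify each side with the principal ideal generated by an associated minimal polynomial.

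First I would observe that $\mathscr{V}(J) = \mathscr{V}(d)$ is the set of right roots of $d$ in $F$, a finite subset $A \subseteq F$, and that $A$ is $\sigma$-algebraic with $\sigma$-minimal polynomial $f_{A,\sigma} = \lcm\{x - a \mid a \in A\}$. By Lemma \ref{union}, a polynomial vanishes at every point of $A$ if and only if it is right-hand divisible by $f_{A,\sigma}$; hence $\mathscr{I}(\mathscr{V}(J)) = F[x;\sigma]\cdot f_{A,\sigma}$. On the other side, I claim $J + F[x;\sigma]\cdot(x^{N+1}-x) = F[x;\sigma]\cdot \gcd\big(d,\ x^{N+1}-x\big)$, the greatest common right divisor (this is where the PID structure and the two-sided Bézout property of $F[x;\sigma]$ enter — the sum of two left ideals $F[x;\sigma]d_1 + F[x;\sigma]d_2$ equals $F[x;\sigma]$ times their gcrd). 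So the theorem reduces to the identity
$$
f_{A,\sigma} = \gcd\big(d,\ x^{N+1}-x\big)
$$
of monic polynomials in $F[x;\sigma]$.

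To prove this identity, I would argue that both sides have the same right roots in $F$ and are each the $\lcm$ of the corresponding linear factors, so equality follows from the uniqueness in Definition \ref{def:rank}; more carefully: $x - a$ right-divides the gcrd iff it right-divides both $d$ and $x^{N+1}-x$, i.e. iff $a \in \mathscr{V}(d) = A$ and $a \in F$, i.e. iff $a \in A$. The subtlety — and the main obstacle — is that over a noncommutative PID a polynomial is not determined by its right roots with multiplicities in the naive sense: I must show that the gcrd has no ``extra'' structure beyond its linear right factors, equivalently that $\gcd(d, x^{N+1}-x)$ is Wedderburn-decomposable as an $\lcm$ of linear factors. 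The right way to see this is: $x^{N+1}-x = \lcm\{x-a \mid a \in F\}$ is itself such an $\lcm$, every right divisor of an $\lcm$ of linear polynomials is again an $\lcm$ of linear polynomials (a standard fact from the Lam–Leroy theory, or provable via the lattice anti-isomorphism between left ideals and $\sigma$-algebraic sets), and $\gcd(d, x^{N+1}-x)$ right-divides $x^{N+1}-x$; so the gcrd equals $\lcm\{x-a \mid a \text{ a right root of the gcrd in } F\} = \lcm\{x - a \mid a \in A\} = f_{A,\sigma}$. Chaining these equalities yields $\mathscr{I}(\mathscr{V}(J)) = F[x;\sigma]\cdot f_{A,\sigma} = F[x;\sigma]\cdot\gcd(d, x^{N+1}-x) = J + F[x;\sigma]\cdot(x^{N+1}-x)$, as required.
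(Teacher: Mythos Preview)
Your proof is correct and follows essentially the same route as the paper: write $J=F[x;\sigma]\cdot d$, identify $\mathscr{I}(\mathscr{V}(J))$ with the left ideal generated by $p_J=\lcm\{x-a\mid a\in\mathscr{V}(J)\}$, identify $J+F[x;\sigma]\cdot(x^{N+1}-x)$ with the left ideal generated by $h=\gcd(d,\,x^{N+1}-x)$, and then prove $h=p_J$ by using that any right divisor of $x^{N+1}-x=\lcm\{x-a\mid a\in F\}$ is itself an $\lcm$ of linear factors. The paper invokes \cite[Theorem~5.1]{LL04} for precisely this last step, which is the ``standard fact from Lam--Leroy theory'' you allude to; you should cite it explicitly rather than leave it as a parenthetical remark, since it is the only nontrivial external input.
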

\begin{proof}
Since $F[x;\sigma]$ is left principal, there exists $f \in F[x;\sigma]$ such that $J= F[x;\sigma] \cdot f$. Set $p_J=\lcm(x-a \mid a \in \mathscr{V}(J))$. Note that $g$ vanishes on $\mathscr{V}(J)$ if and only if $g \in F[x;\sigma] \cdot p_J$. Since $F[x;\sigma]$ is left principal, there exists a unique monic polynomial $h \in F[x;\sigma] \setminus \{0\}$ such that
$$
F[x;\sigma]\cdot f + F[x;\sigma] \cdot \left(x^{\frac{m}{\theta}(p^\theta-1)+1}-x\right)=F[x;\sigma]\cdot h. 
$$It remains to prove that $h=p_J$. Clearly $h$ vanishes on $\mathscr{V}(J)$, so $p_J$ divides $h$. As $h$ divides $x^{\frac{m}{\theta}(p^\theta-1)+1}-x$, by \cite[Theorem 5.1]{LL04} , there exists a subset $A \subset F$ such that $h=\lcm(x-a \mid a \in A)$. Hence $f$ vanishes on $A$, and so $A \subset \mathscr{V}(J)$. Therefore $h$ divides $p_J$. Consequently $h=p_J$. 
\end{proof}
\section{Open questions}
In this section, we collect open questions for further investigation. Regarding the skew Chevalley--Warning theorem, we pose the following.
\begin{prob}
\begin{itemize}
\item Is the bound $n\frac{q^{\frac{1}{o(\sigma)}}-1}{q-1}$ in Theorem \ref{thm:skewchevwar} optimal when $\sigma$ is a nontrivial automorphism? As in the classical case, it is interesting to ask whether Theorem \ref{thm:skewchevwar} can also be deduced from the skew combinatorial Nullstellensatz (Theorem \ref{new2}).
\item In \cite{LP23}, Leep and Petrik present refinements of the classical Chevalley--Warning theorem concerning the lower bound on $\mathscr{V}(f_1,\dots,f_r)$.  Is it possible to obtain a similar improvement in the skew setting, and thereby strengthen Corollary \ref{cor:effnull}?
\end{itemize}
\end{prob}
Regarding the skew Finitesatz, we pose the following.
\begin{prob}
\begin{itemize}
\item Can we obtain a strong skew Finitesatz in the multivariable variable case? Using the same notation as in section \ref{sec:skewfinitesatz}, is it true that, for any left ideal $J$ in $F[x_1,\dots,x_n;\sigma]$, we have
$$
\mathscr{I}(\mathscr{V}(J))=J+\mathscr{I}(F^{n,\sigma})?
$$
\item In \cite[Theorem 7]{Cla14}, Clark proved a skew Finitesatz over an arbitrary field-- namely a result about the zeros of an ideal on a finite subset of $F^n$. Can this be extended to the skew setting? 
\end{itemize}
\end{prob}
\bibliography{biblio2}
\bibliographystyle{amsalpha}
\end{document}